\numberwithin{equation}{section}
\def\Z{\mathbb{Z}}
\def\G{\mathbb{G}}
\def\H{\mathrm{H}}
\DeclareMathOperator{\Hom}{Hom}
\DeclareMathOperator{\End}{End}
\DeclareMathOperator{\Ann}{Ann}
\DeclareMathOperator{\Spec}{Spec}
\DeclareMathOperator{\id}{id}
\DeclareMathOperator{\Aut}{Aut}
\DeclareMathOperator{\Br}{Br}
\DeclareMathOperator{\Pic}{Pic}
\DeclareMathOperator{\LisEt}{Lis-Et}
\theoremstyle{plain}
\numberwithin{equation}{subsection}
\newtheorem{theorem}[subsection]{Theorem}
\newtheorem{proposition}[subsection]{Proposition}
\newtheorem{lemma}[subsection]{Lemma}
\newtheoremstyle{mystyle}
  {}
  {}
  {}
  {}
  {}
  {.}
  { }
  {\textbf{\thmname{#1}\thmnumber{ #2}}\thmnote{ (#3)}}
\theoremstyle{mystyle}
\newtheorem{definition}[subsection]{Definition}
\newtheorem{notation}[subsection]{Notation}
\newtheorem{remark}[subsection]{Remark}
\newtheorem{setup}[subsection]{Setup}
\newtheoremstyle{pgstyle}
  {}
  {}
  {}
  {}
  {}
  {.}
  { }
  {\textbf{\thmname{#1}\thmnumber{#2}}\thmnote{ (#3)}}
\theoremstyle{pgstyle}
\newtheorem{pg}[subsection]{}
\newcommand{\mf}[1]{\mathfrak{#1}}
\newcommand{\ms}[1]{\mathscr{#1}}
\newcommand{\mc}[1]{\mathcal{#1}}
\newcommand{\mb}[1]{\mathbf{#1}}
\newcommand{\mr}[1]{\mathrm{#1}}
\newcommand{\op}{\operatorname{op}}
\newcommand{\et}{\operatorname{\acute et}}
\begin{document}
\title{The cohomological Brauer group of a torsion $\G_{m}$-gerbe}
\author{Minseon Shin}
\email{shinms@math.berkeley.edu}
\urladdr{\url{http://math.berkeley.edu/~shinms}}
\address{Department of Mathematics \\ University of California, Berkeley \\ Berkeley, CA 94720-3840 USA}
\thanks{}
\keywords{Brauer group, gerbe, seminormal}
\subjclass[2010]{14F22, 14D23}
\begin{abstract} Let $S$ be a scheme and let $\pi : \mc{G} \to S$ be a $\G_{m,S}$-gerbe corresponding to a torsion class $[\mc{G}]$ in the cohomological Brauer group $\Br'(S)$ of $S$. We show that the cohomological Brauer group $\Br'(\mc{G})$ of $\mc{G}$ is isomorphic to the quotient of $\Br'(S)$ by the subgroup generated by the class $[\mc{G}]$. This is analogous to a theorem proved by Gabber for Brauer-Severi schemes. \end{abstract}
\date{\today}
\maketitle
\setcounter{tocdepth}{1}
\tableofcontents


\section{Introduction} \label{sec-01}

For an algebraic stack $\mc{X}$, let \[ \Br' \mc{X} := \H^{2}_{\et}(\mc{X} , \G_{m,\mc{X}})_{\mr{tors}} \] denote the \emph{cohomological Brauer group} of $\mc{X}$. Gabber in his thesis described the cohomological Brauer group of a Brauer-Severi scheme $X \to S$ as a quotient of the cohomological Brauer group of the base scheme $S$. More precisely, he proved the following: 

\begin{theorem} \label{20170930-29} \cite[Theorem 2]{GABBER-THESIS} Let $S$ be a scheme, let $\pi_{X} : X \to S$ be a Brauer-Severi scheme. Then the sequence \begin{align} \label{20170930-29-eqn-01} \H^{0}_{\et}(S,\Z) \to \Br' S \stackrel{\pi_{X}^{\ast}}{\to} \Br' X \to 0 \end{align} is exact, where the first map sends $1 \mapsto [X]$. \end{theorem}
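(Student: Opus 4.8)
The plan is to run the Leray spectral sequence for $\pi_{X}$ and the multiplicative group,
\[
E_{2}^{p,q} = \H^{p}_{\et}(S, R^{q}\pi_{X\ast}\G_{m,X}) \Longrightarrow \H^{p+q}_{\et}(X,\G_{m,X}),
\]
and to extract the low-degree terms in total degree $2$. Everything hinges on computing the higher direct images $R^{q}\pi_{X\ast}\G_{m,X}$ as \'etale sheaves on $S$. Since $\pi_{X}$ is \'etale-locally on $S$ isomorphic to the projection $\P^{n}_{S}\to S$, these sheaves may be computed on strictly henselian local bases, where $X$ becomes ordinary projective space. First I would record the three facts $\pi_{X\ast}\G_{m,X}=\G_{m,S}$; $R^{1}\pi_{X\ast}\G_{m,X}\cong\Z$ (the constant sheaf, generated fiberwise by the class of $\mc{O}(1)$, and untwisted because automorphisms of $\P^{n}$ fix $\mc{O}(1)$); and---this is the crucial vanishing---$R^{2}\pi_{X\ast}\G_{m,X}=0$.

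Granting these, the only nonzero rows of $E_{2}$ in the relevant range are $q=0$ and $q=1$, and the sole differential touching total degree $2$ is $d_{2}\colon \H^{p}_{\et}(S,\Z)\to\H^{p+2}_{\et}(S,\G_{m,S})$. The resulting edge sequence reads
\[
0 \to \coker\big(\H^{0}_{\et}(S,\Z)\xrightarrow{d_{2}}\H^{2}_{\et}(S,\G_{m,S})\big) \xrightarrow{\pi_{X}^{\ast}} \H^{2}_{\et}(X,\G_{m,X}) \to \ker\big(\H^{1}_{\et}(S,\Z)\xrightarrow{d_{2}}\H^{3}_{\et}(S,\G_{m,S})\big) \to 0,
\]
the left-hand injection being induced by $\pi_{X}^{\ast}$. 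The key identification is that $d_{2}$ carries the canonical generator $1\in\H^{0}_{\et}(S,\Z)=\H^{0}_{\et}(S,R^{1}\pi_{X\ast}\G_{m,X})$ to the class $[X]\in\Br' S$: indeed $d_{2}(1)$ is exactly the obstruction to lifting the fiberwise class $\mc{O}(1)$ to a genuine line bundle on $X$, which vanishes precisely when $X$ is a projectivized vector bundle, and this obstruction is by definition $[X]$. Hence the image of $d_{2}$ on $\H^{0}_{\et}(S,\Z)$ is the subgroup $\langle[X]\rangle$ generated by $[X]$, and the displayed sequence identifies $\H^{2}_{\et}(X,\G_{m,X})$ up to the torsion-free error term on the right.

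It then remains to pass from $\H^{2}_{\et}$ to its torsion subgroup. Because $\Z$ is torsion-free as an \'etale sheaf, $\H^{1}_{\et}(S,\Z)=\Hom(\pi_{1}(S),\Z)$ is torsion-free, and hence so is the subgroup $\ker(d_{2})$ on the right of the sequence. Applying $(-)_{\mr{tors}}$ therefore identifies $\Br' X=\H^{2}_{\et}(X,\G_{m,X})_{\mr{tors}}$ with the torsion subgroup of $\H^{2}_{\et}(S,\G_{m,S})/\langle[X]\rangle$. Finally, since $[X]$ is itself torsion (a Brauer--Severi scheme with fibers $\P^{n}$ has class annihilated by $n+1$), any class in $\H^{2}_{\et}(S,\G_{m,S})$ whose image in the quotient is torsion is already torsion; thus $\big(\H^{2}_{\et}(S,\G_{m,S})/\langle[X]\rangle\big)_{\mr{tors}}=\Br' S/\langle[X]\rangle$, and $\pi_{X}^{\ast}$ realizes this as the quotient map. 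Combining these identifications gives the exactness of \eqref{20170930-29-eqn-01}.

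The main obstacle is the vanishing $R^{2}\pi_{X\ast}\G_{m,X}=0$. Because $\G_{m}$ is not a torsion sheaf, proper base change does not apply directly, so I would not simply invoke $\H^{2}_{\et}(\P^{n}_{\kappa},\G_{m})=\Br\kappa$ on fibers. Instead I would argue on a strictly henselian local base $A$, showing that $\H^{2}_{\et}(\P^{n}_{A},\G_{m})=\H^{2}_{\et}(\Spec A,\G_{m})$ so that the relative contribution vanishes, via the Picard and Brauer computation for projective space over a local ring together with the comparison of $\H^{1}$ and $\H^{2}$ across the structure map. Making the identification $d_{2}(1)=[X]$ fully rigorous---matching the abstract transgression in the spectral sequence with the geometric Brauer class of the Brauer--Severi scheme---is the other delicate point, and I expect it to require unwinding the construction of $[X]$ as a \v{C}ech obstruction on an \'etale trivializing cover.
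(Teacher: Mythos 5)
Your proof architecture is the correct one, and it coincides with the outline this paper follows for its gerbe analogue \Cref{20170930-01} (note that the paper does not itself prove the present statement; it cites Gabber's thesis): the Leray spectral sequence for $(\pi_{X},\G_{m,X})$, the computations $\pi_{X\ast}\G_{m,X}=\G_{m,S}$, $\mb{R}^{1}\pi_{X\ast}\G_{m,X}\simeq\underline{\Z}$, $\mb{R}^{2}\pi_{X\ast}\G_{m,X}=0$, the transgression identity $\mr{d}_{2}(1)=[X]$, and the passage to torsion subgroups. Your edge sequence in total degree $2$ is assembled correctly, and your final torsion bookkeeping (torsion-freeness of the right-hand kernel, $[X]$ torsion, hence $\pi_{X}^{\ast}$ surjective on torsion with kernel $\langle[X]\rangle$) is exactly the argument the paper runs in \Cref{sec-05} and is sound.

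The gap is that the two nontrivial inputs are asserted rather than proved. The serious one is $\mb{R}^{2}\pi_{X\ast}\G_{m,X}=0$, equivalently $\H^{2}_{\et}(\P^{n}_{A},\G_{m})=0$ for $A$ strictly henselian local: this is the technical heart of Gabber's theorem, and the route you sketch (``the Picard and Brauer computation for projective space over a local ring'') is essentially circular, since that computation is precisely what must be established. The difficulty your sketch does not engage is that $A$, being the strict henselization of an arbitrary local ring, need not be normal or even seminormal, so $\Pic(A[t_{1},\dots,t_{n}])\neq\Pic(A)$ in general by Traverso's theorem; consequently the cohomology of the affine charts of $\P^{n}_{A}$ cannot be controlled by normal/regular-case intuition, and, as you yourself note, proper base change is unavailable for $\G_{m}$. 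This is exactly the phenomenon that forces the present paper, in its parallel computation of $\H^{2}_{\et}(\mr{B}\G_{m,A},\G_{m})$ (\Cref{20170930-08}), to develop the key \Cref{20170930-13} via seminormalization, conductor ideals, and the Units-Pic sequence; an honest proof of Gabber's theorem needs work of comparable substance. The identification $\mr{d}_{2}(1)=[X]$ is likewise deferred (it is true, up to sign; the paper proves the gerbe analogue in \Cref{20171214-03} using Giraud's theory of the transgression). Finally, a small but real error: $\H^{1}_{\et}(S,\Z)\neq\Hom(\pi_{1}(S),\Z)$ in general --- $\Z$-torsors need not be finite \'etale, and $\H^{1}_{\et}(S,\underline{\Z})\simeq\Z\neq 0$ already for a nodal curve over an algebraically closed field --- although the torsion-freeness you need is true for any scheme and is proved in \Cref{20171216-03}.
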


In this paper we prove an analogue of the above theorem for torsion $\G_{m}$-gerbes.

\begin{theorem} \label{20170930-01} Let $S$ be a scheme, let $\pi_{\mc{G}} : \mc{G} \to S$ be a $\G_{m,S}$-gerbe corresponding to a torsion class $[\mc{G}] \in \Br' S$. Then the sequence \begin{align} \label{20170930-01-eqn-01} \H^{0}_{\et}(S,\Z) \to \Br' S \stackrel{\pi_{\mc{G}}^{\ast}}{\to} \Br' \mc{G} \to 0 \end{align} is exact, where the first map sends $1 \mapsto [\mc{G}]$. \end{theorem}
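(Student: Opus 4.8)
The plan is to analyze the Leray spectral sequence
\[ E_2^{p,q} = \H^{p}_{\et}(S, R^{q}\pi_{\mc{G}\ast}\G_{m,\mc{G}}) \Rightarrow \H^{p+q}_{\et}(\mc{G}, \G_{m,\mc{G}}) \]
and to show that it degenerates into two rows. First I would compute the higher pushforwards $R^{q}\pi_{\mc{G}\ast}\G_{m}$ by passing to stalks: the stalk at a geometric point $\bar s \to S$ is the cohomology of $\mc{G}$ over the strictly henselian local ring $\mc{O}^{sh}_{S,\bar s}$, and since that ring has trivial Brauer group the gerbe becomes neutral there, so $\mc{G}_{\bar s} \cong B\G_{m,\bar s}$. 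Computing $\H^{q}_{\et}(B\G_{m}, \G_{m})$ over a strictly henselian local base — for instance via the standard approximation of $B\G_{m}$ by the projective spaces $\P^{N}$ for $N \gg q$ — yields $\G_{m}$ in degree $0$, $\Pic(B\G_{m}) = \Z$ in degree $1$, and $0$ in every degree $\geq 2$. Hence $R^{0}\pi_{\mc{G}\ast}\G_{m} = \G_{m,S}$, $R^{1}\pi_{\mc{G}\ast}\G_{m} = \underline{\Z}$, and $R^{q}\pi_{\mc{G}\ast}\G_{m} = 0$ for $q \geq 2$. I expect this local computation of the cohomology of $B\G_{m}$ to be the main technical input.

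With only the rows $q = 0, 1$ surviving, the spectral sequence collapses to a long exact sequence
\begin{align*} \cdots \to \H^{n-2}_{\et}(S,\Z) \xrightarrow{d_2} \H^{n}_{\et}(S,\G_{m}) \xrightarrow{\pi_{\mc{G}}^{\ast}} \H^{n}_{\et}(\mc{G},\G_{m}) \to \H^{n-1}_{\et}(S,\Z) \xrightarrow{d_2} \H^{n+1}_{\et}(S,\G_{m}) \to \cdots, \end{align*}
in which the edge map in degree $n$ is $\pi_{\mc{G}}^{\ast}$ and the transgression $d_2 = d_2^{0,1}$ is cup product with the class $[\mc{G}] := d_2^{0,1}(1) \in \H^{2}_{\et}(S,\G_{m})$. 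The second key step is to identify this class, i.e. to check that $d_2^{0,1}(1)$ really is the class of the gerbe: the obstruction to lifting the tautological weight-one element of $\H^{0}_{\et}(S, R^{1}\pi_{\mc{G}\ast}\G_{m}) = \H^{0}_{\et}(S,\Z)$ to a global class on $\mc{G}$ should be exactly $[\mc{G}]$. Reconciling this differential with the $\H^{2}$-class that classifies $\mc{G}$ is the step I expect to require the most care.

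Granting these two inputs, the degree-$2$ segment of the long exact sequence reads
\begin{align*} \H^{0}_{\et}(S,\Z) \xrightarrow{\,\cup\,[\mc{G}]\,} \H^{2}_{\et}(S,\G_{m}) \xrightarrow{\pi_{\mc{G}}^{\ast}} \H^{2}_{\et}(\mc{G},\G_{m}) \to \H^{1}_{\et}(S,\Z) \xrightarrow{d_2} \H^{3}_{\et}(S,\G_{m}). \end{align*}
Since $[\mc{G}]$ is torsion by hypothesis, the image of the first map lies in $\Br' S$, and exactness at $\H^{2}_{\et}(S,\G_{m})$ shows that $\ker(\pi_{\mc{G}}^{\ast}\colon \Br' S \to \Br'\mc{G})$ is the subgroup $\langle[\mc{G}]\rangle$ generated by $[\mc{G}]$, which is the image of $\H^{0}_{\et}(S,\Z)$; this gives exactness at $\Br' S$.

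For surjectivity onto $\Br'\mc{G}$, I would argue that $\coker(\pi_{\mc{G}}^{\ast})$ injects into $\H^{1}_{\et}(S,\Z)$, which is torsion-free because any continuous homomorphism from the profinite group $\pi_{1}^{\et}(S)$ to the torsion-free discrete group $\Z$ is trivial. Hence a torsion class $\beta \in \Br'\mc{G}$ maps to $0$ in the cokernel, so $\beta = \pi_{\mc{G}}^{\ast}\alpha$ for some $\alpha \in \H^{2}_{\et}(S,\G_{m})$. If $N\beta = 0$ then $N\alpha \in \ker\pi_{\mc{G}}^{\ast} = \langle[\mc{G}]\rangle$, which is torsion, so $\alpha$ is itself torsion and $\alpha \in \Br' S$; thus $\pi_{\mc{G}}^{\ast}\colon \Br' S \to \Br'\mc{G}$ is surjective. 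The torsion hypothesis on $[\mc{G}]$ enters precisely here and in placing the image of the first map inside $\Br' S$, completing the exactness of \eqref{20170930-01-eqn-01}.
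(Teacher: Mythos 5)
Your outline reproduces the paper's architecture: the Leray spectral sequence for $\pi_{\mc{G}}$, the identification of $\mb{R}^{1}\pi_{\mc{G},\ast}\G_{m}$ with $\underline{\Z}$ and of the transgression $\mr{d}_{2}^{0,1}(1)$ with $[\mc{G}]$, the vanishing of $\mb{R}^{2}\pi_{\mc{G},\ast}\G_{m}$, the torsion-freeness of $\H^{1}_{\et}(S,\Z)$, and the closing torsion argument (your last paragraph is essentially the paper's own proof). But the step you yourself flag as ``the main technical input'' is precisely where there is a genuine gap. You propose to compute $\H^{q}_{\et}(\mr{B}\G_{m,A},\G_{m})$ for $A$ strictly henselian local by approximating $\mr{B}\G_{m}$ by projective spaces. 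The comparison map is the smooth surjection $f : \P^{N}_{A} \to \mr{B}\G_{m,A}$ classifying $\mc{O}(-1)$, whose fibers are $\A^{N+1}\setminus\{0\}$, and to deduce $\H^{2}_{\et}(\mr{B}\G_{m,A},\G_{m}) = 0$ from $\H^{2}_{\et}(\P^{N}_{A},\G_{m}) = 0$ one must at minimum control $\mb{R}^{1}f_{\ast}\G_{m}$, i.e.\ Picard groups of $\A^{N+1}_{B}\setminus\{0\}$ over the relevant local rings $B$. This is a homotopy-invariance statement for $\Pic$, and it fails for exactly the rings at issue: a strictly henselian local ring need not be seminormal (e.g.\ the strict henselization of $k[x,y]/(y^{2}-x^{3})$ at the cusp), and by Traverso's theorem $\mr{N}_{t}\mr{Pic}(A) = \ker(\Pic(A[t]) \to \Pic(A)) \neq 0$ for such $A$; pulling back along the coordinate projection $\A^{N+1}_{A}\setminus\{0\} \to \A^{1}_{A}$ and restricting along the section $u \mapsto (u,1,0,\dotsc,0)$ shows $\Pic(\A^{N+1}_{A}\setminus\{0\}) \neq 0$ as well. (With finite coefficients the $\P^{N}$-approximation of $\mr{B}\G_{m}$ is standard; $\G_{m}$ is not $\A^{1}$-invariant, and that is the whole difficulty.) So the approximation argument either fails outright or secretly requires the analysis of units and Picard groups of (Laurent) polynomial rings over non-seminormal rings --- Milnor squares, seminormalization --- that occupies \Cref{sec-03} and \Cref{sec-04} of the paper, with \Cref{20170930-13} as the key point. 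In other words, your proposal presupposes the hard part rather than proving it. (Also, you claim $\mb{R}^{q}\pi_{\mc{G},\ast}\G_{m} = 0$ for all $q \geq 2$; the paper proves, and the argument needs, only the case $q = 2$, and the vanishing in all higher degrees is neither established by your sketch nor required for the five-term exact sequence you use.)

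A second, smaller gap: your justification that $\H^{1}_{\et}(S,\Z)$ is torsion-free rests on the identification $\H^{1}_{\et}(S,\Z) \simeq \Hom_{\mr{cont}}(\pi_{1}^{\et}(S),\Z)$, which would in fact force vanishing. That identification is false for general $S$ (and the theorem allows arbitrary schemes): for a nodal cubic curve over an algebraically closed field, $\H^{1}_{\et}(S,\Z) \simeq \Z \neq 0$, the nontrivial $\Z$-torsor being the infinite chain of $\P^{1}$'s, which is étale but not finite, whereas $\Hom_{\mr{cont}}(\pi_{1}^{\et}(S),\Z) = 0$ since $\pi_{1}^{\et}(S)$ is profinite. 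Vanishing of $\H^{1}_{\et}(S,\Z)$ holds only under hypotheses such as $S$ geometrically unibranch \cite[VIII, Prop. 5.1]{SGA7-I}, as used in \Cref{20180331-02}. The statement you need --- torsion-freeness, not vanishing --- is nevertheless true over an arbitrary site: this is \Cref{20171216-03}, proved via the observation that a surjection of abstract groups induces a surjection on global sections of the associated constant sheaves (or see \cite[Corollary 7.9.1]{WEIBEL-PIACF} for quasi-compact quasi-separated schemes). So this second gap is reparable by citing a correct argument, unlike the first, which concerns the actual mathematical core of the theorem.
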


\begin{remark} \label{20170930-24} Let $\mc{A}$ be an Azumaya $\mc{O}_{S}$-algebra and let $\pi_{X} : X \to S$ and $\pi_{\mc{G}} : \mc{G} \to S$ be the associated Brauer-Severi scheme and $\G_{m}$-gerbe of trivializations, respectively. By \cite[\S8, 4]{QUILLEN-HAKT-I} there exists a finite locally free $\mc{O}_{X}$-module $J$ and an $\mc{O}_{X}$-algebra isomorphism $\pi_{X}^{\ast}\mc{A} \simeq \underline{\End}_{\mc{O}_{X}}(J)^{\op}$. There is an $\mc{O}_{X}$-algebra isomorphism $\underline{\End}_{\mc{O}_{X}}(J)^{\op} \simeq \underline{\End}_{\mc{O}_{X}}(J^{\vee})$ sending $\varphi \mapsto \varphi^{\vee}$. Since $\mc{G}$ is the gerbe of trivializations of $\mc{A}$, we have an $S$-morphism $f : X \to \mc{G}$; this induces a commutative triangle $\pi_{X}^{\ast} = f^{\ast}\pi_{\mc{G}}^{\ast}$ on the cohomological Brauer groups of $S,\mc{G},X$. Since $[\mc{G}] \in \ker \pi_{\mc{G}}^{\ast} \subseteq \ker \pi_{X}^{\ast}$ and $\ker \pi_{X}^{\ast}$ is generated by $[\mc{G}]$ by \Cref{20170930-29}, we have exactness of \labelcref{20170930-01-eqn-01} at $\Br' S$. The difficulty of \Cref{20170930-01} is in showing that $\pi_{\mc{G}}^{\ast}$ is surjective. \end{remark}

\begin{remark} \label{20170930-32} By \Cref{20170930-01} and \Cref{20170930-24}, the pullback map \[ f^{\ast} : \Br' \mc{G} \to \Br' X \] is an isomorphism, in other words the cohomological Brauer groups of a Brauer-Severi scheme and its associated $\G_{m}$-gerbe are isomorphic. \end{remark}

\begin{remark} \label{20170930-28} For an algebraic stack $\mc{X}$, let $\Br \mc{X}$ denote the \emph{(Azumaya) Brauer group} of $\mc{X}$, namely the abelian group of Morita equivalence classes of Azumaya $\mc{O}_{\mc{X}}$-algebras. There is a functorial injective homomorphism \[ \alpha_{\mc{X}} : \Br \mc{X} \to \Br' \mc{X} \] called the \emph{Brauer map} of $\mc{X}$, which sends an Azumaya $\mc{O}_{\mc{X}}$-algebra $\mc{A}$ to the associated $\G_{m,\mc{X}}$-gerbe of trivializations of $\mc{A}$. Then \Cref{20170930-01} provides a class of algebraic stacks $\mc{G}$ for which the Brauer map $\alpha_{\mc{G}}$ is surjective. Indeed, if $S$ is a scheme for which $\alpha_{S}$ is surjective and $\pi_{\mc{G}} : \mc{G} \to S$ is a torsion $\G_{m}$-gerbe, then $\alpha_{\mc{G}}$ is surjective by \Cref{20170930-01} and functoriality of the Brauer map. \end{remark}

\begin{pg} \label{20170930-31} We outline the proof of \Cref{20170930-01}. As in \cite{GABBER-THESIS}, the exact sequence \labelcref{20170930-01-eqn-01} comes from the Leray spectral sequence for the map $\pi_{\mc{G}}$ and sheaf $\G_{m,\mc{G}}$. One step in the proof of \Cref{20170930-01} is to show the vanishing of the higher pushforwards $\mb{R}^{2}\pi_{\mc{G},\ast}\G_{m,\mc{G}}$. The stalk of $\mb{R}^{2}\pi_{\mc{G},\ast}\G_{m,\mc{G}}$ at a geometric point $\overline{s}$ of $S$ is isomorphic to $\H^{2}_{\et}(\mr{B}\G_{m,A} , \G_{m,\mr{B}\G_{m,A}})$ where $A = \mc{O}_{S,\overline{s}}^{\mr{sh}}$ is the strict henselization of $S$ at $\overline{s}$. We compute $\H^{2}_{\et}(\mr{B}\G_{m,A} , \G_{m,\mr{B}\G_{m,A}})$ using the descent spectral sequence associated to the covering $\xi : \Spec A \to \mr{B}\G_{m,A}$, whose $q$th row is the \v{C}ech complex associated to the cosimplicial abelian group obtained by applying the functor $\H^{q}_{\et}(-,\G_{m})$ to the simplicial $A$-scheme $\{\G_{m,A}^{\times p}\}_{p \ge 0}$ obtained by taking fiber products of $\xi$. In \Cref{sec-03} and \Cref{sec-04}, we show that the $\mr{E}_{2}^{1,1}$ and $\mr{E}_{2}^{2,0}$ terms of this spectral sequence vanish, respectively. It is harder to show that $\mr{E}_{2}^{1,1} = 0$, which comes down to showing that \[ m^{\ast} - p_{1}^{\ast} - p_{2}^{\ast} : \Pic(A[t^{\pm}]) \to \Pic(A[t_{1}^{\pm},t_{2}^{\pm}]) \] is injective, where $m,p_{1},p_{2} : A[t^{\pm}] \to A[t_{1}^{\pm},t_{2}^{\pm}]$ are the $A$-algebra maps sending $t \mapsto t_{1}t_{2},t_{1},t_{2}$ respectively. If $A$ is a normal domain, then $\Pic(A) \simeq \Pic(A[t^{\pm}]) \simeq \Pic(A[t_{1}^{\pm},t_{2}^{\pm}])$ so the result is trivial. In case $A$ is not normal, we use the Units-Pic sequence associated to the Milnor square of the normalization $A \to \overline{A}$. We view \Cref{20170930-13} as the key lemma of this paper. \end{pg}

\begin{pg}[Acknowledgements] I thank my advisor Martin Olsson for suggesting this research topic, for generously sharing his knowledge and ideas, and for his patience. I am also grateful to Aise Johan de Jong, Ray Hoobler, Ariyan Javanpeykar, Max Lieblich, Siddharth Mathur, Lennart Meier, and Charles Weibel for helpful discussions and feedback. During this project, I received support from NSF grant DMS-1646385. \end{pg}

\section{Gerbes and the transgression map} \label{sec-02}

The purpose of this section is to prove \Cref{20171214-02}, a description of the higher pushforward $\mb{R}^{1}\pi_{\ast}\G_{m,\mc{G}}$ for a gerbe $\pi : \mc{G} \to \mc{S}$, and \Cref{20171214-03}, a description of the differential $\mr{d}_{2}^{0,1} : \mr{E}_{2}^{0,1} \to \mr{E}_{2}^{2,0}$ in the Leray spectral sequence associated to $\pi$ in terms of torsors and gerbes. This map $\mr{d}_{2}^{0,1}$ is called the \emph{transgression map} \cite[V, \S3.2]{GIRAUD-CN}. \par We first recall some background on gerbes. The standard reference is \cite{GIRAUD-CN}.

\begin{definition} \label{20171214-04} Let $\mc{S}$ be a site, and let $\pi : \mc{G} \to \mc{S}$ be a category fibered in groupoids. We view $\mc{G}$ as a site with the Grothendieck topology inherited from $\mc{S}$ \cite[III, 3.1]{SGA4}. For any object $U \in \mc{S}$, let $\mc{G}(U)$ denote the fiber category of $\mc{G}$ over $U$. \par The \emph{inertia stack} of $\pi : \mc{G} \to \mc{S}$ is the 2-fiber product $I_{\mc{G}/\mc{S}} := \mc{G} \times_{\Delta_{\mc{G}/\mc{S}},\mc{G} \times_{\mc{S}} \mc{G},\Delta_{\mc{G}/\mc{S}}} \mc{G}$ of the diagonal $\Delta_{\mc{G}/\mc{S}} : \mc{G} \to \mc{G} \times_{\mc{S}} \mc{G}$ with itself. The inertia stack $I_{\mc{G}/\mc{S}}$ is fibered in sets over $\mc{G}$ via either projection $I_{\mc{G}/\mc{S}} \to \mc{G}$, hence we may identify $I_{\mc{G}/\mc{S}}$ with the sheaf of groups on $\mc{G}$ associating $x \mapsto \Aut_{\mc{G}}(x)$. \par We say that $\pi$ is a \emph{gerbe} if the following conditions are satisfied: \begin{enumerate} \item[(i)] The fibered category $\mc{G}$ is a stack over $\mc{S}$. \item[(ii)] For any $U \in \mc{S}$, there exists a covering $\{U_{i} \to U\}_{i \in I}$ such that $\mc{G}(U_{i}) \ne \emptyset$ for all $i \in I$. \item[(iii)] For any $U \in \mc{S}$ and $x_{1},x_{2} \in \mc{G}(U)$, there exists a covering $\{U_{i} \to U\}_{i \in I}$ such that for all $i \in I$ there exists an isomorphism $x_{1}|_{U_{i}} \simeq x_{2}|_{U_{i}}$ in $\mc{G}(U_{i})$. \end{enumerate} Let $\mb{A}$ be an abelian sheaf on $\mc{S}$. We say that a gerbe $\pi$ is an \emph{$\mb{A}$-gerbe} if it is equipped with an isomorphism $\iota : \mb{A}_{\mc{G}} \to I_{\mc{G}/\mc{S}}$ of sheaves of groups on $\mc{G}$. \par If $\mc{S}$ is equipped with a sheaf of rings such that $(\mc{S},\mc{O}_{\mc{S}})$ is a locally ringed site, we set $\mc{O}_{\mc{G}} := \pi^{-1}\mc{O}_{\mc{S}}$; then the pair $(\mc{G},\mc{O}_{\mc{G}})$ is a locally ringed site. \qed \end{definition}

For the remainder of this section, we will assume the following setup:

\begin{setup} \label{20171214-05} Let $\mc{S}$ be a locally ringed site, let $\mb{A}$ be an abelian sheaf on $\mc{S}$, let $\pi : \mc{G} \to \mc{S}$ be an $\mb{A}$-gerbe. \end{setup}

\begin{definition}[Inertial action, eigensheaves, twisted sheaves] \label{20171214-06} \cite{LIEBLICH-THESIS}, \cite{LIEBLICH-TSAPIP} Let $\ms{F}$ be an $\mc{O}_{\mc{G}}$-module. For an object $x \in \mc{G}$ and $a \in \Gamma(x,\mb{A}_{\mc{G}})$, let $\iota(a)^{\ast} : \Gamma(x,\ms{F}) \to \Gamma(x,\ms{F})$ be the restriction map of the sheaf $\ms{F}$ via the automorphism $\iota(a) : x \to x$; such $x$ and $a$ defines an $\mc{O}_{\mc{G}/x}$-linear automorphism of $\ms{F}|_{\mc{G}/x}$ by $\{y \to x\} \mapsto \iota(a|_{y})^{\ast}$; thus we have a homomorphism $\mb{A}_{\mc{G}} \to \underline{\mr{Aut}}_{\mc{O}_{\mc{G}}}(\ms{F})$ of group sheaves on $\mc{G}$ corresponding to an $\mc{O}_{\mc{G}}$-linear $\mb{A}_{\mc{G}}$-action on $\ms{F}$, called the \emph{inertial action}. \par Let \[ \widehat{\mb{A}} := \Hom_{\mr{Ab}(\mc{S})}(\mb{A},\G_{m,\mc{S}}) \] denote the group of characters of $\mb{A}$. Given an $\mc{O}_{\mc{G}}$-module $\ms{F}$ and a character $\chi \in \widehat{\mb{A}}$, the $\chi$th \emph{eigensheaf} is the subsheaf $\ms{F}_{\chi} \subseteq \ms{F}$ defined as follows: for all objects $x \in \mc{G}$, a section $f \in \Gamma(x,\ms{F})$ is contained in $\Gamma(x,\ms{F}_{\chi})$ if for any morphism $y \to x$ and any $a \in \Gamma(y,\mb{A}_{\mc{G}})$ we have \begin{align} \label{20171214-06-eqn-01} (\iota(a)^{\ast})(f|_{y}) = \chi_{\mc{G}}(a) \cdot f|_{y} \end{align} in $\Gamma(y,\ms{F})$. The $\mc{O}_{\mc{G}}$-module $\ms{F}$ is called \emph{$\chi$-twisted} if the inclusion $\ms{F}_{\chi} \subseteq \ms{F}$ is an equality. If $\chi$ is the trivial character, the eigensheaf $\ms{F}_{\chi}$ is denoted $\ms{F}_{0}$ and $\chi$-twisted sheaves are called $0$-twisted. For any $\mc{O}_{\mc{S}}$-module $M$, the pullback $\pi^{\ast}M$ is $0$-twisted; in particular the structure sheaf $\mc{O}_{\mc{G}}$ is $0$-twisted. \qed \end{definition}

\begin{definition}[Category of $\chi$-twisted modules] \label{20171214-07} For a character $\chi \in \widehat{\mb{A}}$, let \[ \mr{Mod}(\mc{G},\chi) \] denote the full subcategory of $\mr{Mod}(\mc{G})$ consisting of $\chi$-twisted $\mc{O}_{\mc{G}}$-modules. \qed \end{definition}

\begin{remark} \label{20171214-11} Given two $\mc{O}_{\mc{G}}$-modules $\ms{F}$ and $\ms{G}$, any $\mc{O}_{\mc{G}}$-linear morphism $\varphi : \ms{F} \to \ms{G}$ restricts to an $\mc{O}_{\mc{G}}$-linear morphism $\varphi_{\chi} : \ms{F}_{\chi} \to \ms{G}_{\chi}$; the assignment $\ms{F} \mapsto \ms{F}_{\chi}$ defines a functor $\mr{Mod}(\mc{G}) \to \mr{Mod}(\mc{G},\chi)$ which is right adjoint (and a retraction) to the inclusion $\mr{Mod}(\mc{G},\chi) \to \mr{Mod}(\mc{G})$. \end{remark}

\begin{remark}[Modules on trivial gerbes] \label{20171214-08} We say that an $\mb{A}$-gerbe $\mc{G}$ is \emph{trivial} if there is an isomorphism $\mc{G} \simeq \mathrm{B}\mb{A}$. In this case we have the usual equivalence of categories between sheaves on $\mc{G}$ and sheaves on $\mc{S}$ equipped with an $\mb{A}$-action. For a sheaf $\ms{F} \in \mr{Sh}(\mr{B}\mb{A})$, the pushforward $\pi_{\ast}\ms{F}$ is identified with the subsheaf of $\ms{F}$ of sections invariant under the action of $\mb{A}$. For any sheaf $M \in \mr{Sh}(\mc{S})$, the inverse image $\pi^{-1}M \in \mr{Sh}(\mr{B}\mb{A})$ corresponds to the sheaf $M$ equipped with the trivial $\mb{A}$-action. If $s : \mc{S} \to \mr{B}\mb{A}$ is the section of $\pi$ corresponding to the trivial $\mb{A}$-torsor, then $s^{-1} : \mr{Sh}(\mr{B}\mb{A}) \to \mr{Sh}(\mc{S})$ is the functor forgetting the $\mb{A}$-action. \end{remark}

\begin{remark} \label{20171214-09} For any $\mc{O}_{\mc{G}}$-module $\ms{F}$, the counit map \[ \pi^{\ast}\pi_{\ast}\ms{F} \to \ms{F} \] is injective and its image coincides with $\ms{F}_{0}$. Indeed, this can be checked locally on $\mc{S}$, in which case we may assume $\mc{G}$ is the trivial gerbe and use \Cref{20171214-08}. \end{remark}

\begin{lemma} \label{20171214-10} Let $\mc{S}$ be a locally ringed site, let $\mb{A}$ be an abelian sheaf on $\mc{S}$, and let $\pi : \mc{G} \to \mc{S}$ be an $\mb{A}$-gerbe. The pullback functor \[ \pi^{\ast} : \mr{Mod}(\mc{S}) \to \mr{Mod}(\mc{G},0) \] is an equivalence of categories with quasi-inverse $\pi_{\ast}$. If $\mr{P}$ is a property of modules preserved by pullback via arbitrary morphisms of sites (e.g. quasi-coherent, flat, locally of finite type, locally of finite presentation, locally free), an $\mc{O}_{\mc{S}}$-module $M$ has $\mr{P}$ if and only if the $\mc{O}_{\mc{G}}$-module $\pi^{\ast}M$ has $\mr{P}$. \end{lemma}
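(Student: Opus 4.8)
The plan is to realize $\pi^{\ast}$ and $\pi_{\ast}$ as mutually quasi-inverse by showing that the unit and counit of the adjunction $\pi^{\ast} \dashv \pi_{\ast}$ are isomorphisms on the appropriate categories, using local triviality of $\pi$ together with \Cref{20171214-08} and \Cref{20171214-09}. First I would record that $\pi^{\ast}$ really does land in $\mr{Mod}(\mc{G},0)$: by \Cref{20171214-06} the pullback $\pi^{\ast}M$ of any $\mc{O}_{\mc{S}}$-module $M$ is $0$-twisted, and since $\mc{O}_{\mc{G}} = \pi^{-1}\mc{O}_{\mc{S}}$ by \Cref{20171214-04}, the pullback of modules agrees with the inverse image of abelian sheaves, i.e. $\pi^{\ast}M = \pi^{-1}M$. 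Writing $\eta_{M} : M \to \pi_{\ast}\pi^{\ast}M$ and $\varepsilon_{\ms{F}} : \pi^{\ast}\pi_{\ast}\ms{F} \to \ms{F}$ for the unit and counit, it then suffices to prove that $\eta_{M}$ is an isomorphism for every $\mc{O}_{\mc{S}}$-module $M$ and that $\varepsilon_{\ms{F}}$ is an isomorphism for every $0$-twisted $\ms{F}$, since this is exactly the criterion for $\pi^{\ast}$ and $\pi_{\ast}|_{\mr{Mod}(\mc{G},0)}$ to be an adjoint equivalence.

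For the counit, \Cref{20171214-09} already gives that $\varepsilon_{\ms{F}}$ is injective with image $\ms{F}_{0}$; if $\ms{F} \in \mr{Mod}(\mc{G},0)$ then $\ms{F}_{0} = \ms{F}$ by \Cref{20171214-06}, so $\varepsilon_{\ms{F}}$ is an isomorphism. For the unit, being an isomorphism is a local condition on $\mc{S}$, so by condition (ii) of \Cref{20171214-04} I may pass to a covering on which $\mc{G}$ becomes the trivial gerbe $\mr{B}\mb{A}$. There \Cref{20171214-08} identifies $\pi^{\ast}M = \pi^{-1}M$ with $M$ carrying the trivial $\mb{A}$-action and identifies $\pi_{\ast}$ with the functor of $\mb{A}$-invariant sections; as every section is invariant for the trivial action, $\pi_{\ast}\pi^{\ast}M = M$ and $\eta_{M}$ is the identity. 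Together with the counit computation this establishes the asserted equivalence.

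For the statement about a property $\mr{P}$, the forward implication is immediate, since $\mr{P}$ is by hypothesis preserved by the pullback $\pi^{\ast}$. For the converse, suppose $\pi^{\ast}M$ has $\mr{P}$. Because each of the listed properties is local on the base, I may again restrict to a covering $\{U_{i} \to U\}$ with $\mc{G}(U_{i}) \neq \emptyset$; a choice of object in $\mc{G}(U_{i})$ furnishes a section $s_{i}$ of $\pi$ over $U_{i}$, whence $s_{i}^{\ast}\pi^{\ast}M = (\pi \circ s_{i})^{\ast}M$ is the restriction $M|_{U_{i}}$. Since $s_{i}^{\ast}$ preserves $\mr{P}$, each $M|_{U_{i}}$ has $\mr{P}$, and by locality $M$ has $\mr{P}$.

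The steps are mostly a matter of assembling the preceding remarks, and no single point is genuinely hard. The one place to be careful is the identification of $\pi_{\ast}\pi^{\ast}$ with the identity via reduction to the trivial gerbe: one must check that the reduction to $\mr{B}\mb{A}$ is compatible with the unit map and that the invariants description of \Cref{20171214-08} matches the module-theoretic pushforward under $\mc{O}_{\mc{G}} = \pi^{-1}\mc{O}_{\mc{S}}$. Once this is pinned down, the counit is handled outright by \Cref{20171214-09} and the property statement follows from the existence of local sections.
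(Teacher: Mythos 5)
Your proposal is correct and takes essentially the same route as the paper: reduce to the trivial gerbe using local triviality, apply \Cref{20171214-08} and \Cref{20171214-09} to handle the unit and counit, and use local sections of $\pi$ for the statement about the property $\mr{P}$. The only cosmetic differences are that you apply \Cref{20171214-09} globally for the counit (avoiding a second localization) and pull back along the local sections $s_{i}$ directly via $s_{i}^{\ast}\pi^{\ast}M = M|_{U_{i}}$, whereas the paper routes this through the identification $\pi_{\ast}\ms{F} \simeq s^{\ast}\ms{F}$.
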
 \begin{proof} For the first assertion, it suffices to show that for any $\mc{O}_{\mc{S}}$-module $M$ the unit map \[ M \to \pi_{\ast}\pi^{\ast}M \] is an isomorphism, and that for any $0$-twisted $\mc{O}_{\mc{G}}$-module $\ms{F}$ the counit map \[ \pi^{\ast}\pi_{\ast}\ms{F} \to \ms{F} \] is an isomorphism. Both of these claims are local on $\mc{S}$, hence we may assume that $\mc{G}$ is trivial, in which case the claims follow from \Cref{20171214-08} and \Cref{20171214-09}. The second assertion is also local on $\mc{S}$, hence we may assume that $\mc{G}$ is trivial, in which case there is a section $s : \mc{S} \to \mc{G}$ of $\pi$. For any $0$-twisted $\mc{O}_{\mc{G}}$-module $\ms{F}$, we have $\pi_{\ast}\ms{F} \simeq s^{\ast}\ms{F}$ by the discussion in \Cref{20171214-08}. \end{proof}

In order to describe the higher pushforwards $\mb{R}^{1}\pi_{\ast}\G_{m}$, we will use the following result on the Picard group of $\mb{A}$-gerbes.

\begin{remark}[Picard group of $\mb{A}$-gerbes] \label{20171214-01} Assume the setup of \Cref{20171214-10}. By \cite[5.3.4]{BROCHARD-FDPDCA}, for any invertible $\mc{O}_{\mc{G}}$-module $\mc{L}$, there exists a unique character \[ \chi_{\mc{L}} \in \widehat{\mb{A}} \] such that the diagram \begin{equation} \label{20171214-01-eqn-02} \begin{tikzpicture}[>=angle 90, baseline=(current bounding box.center)] 
\matrix[matrix of math nodes,row sep=2em, column sep=2em, text height=1.5ex, text depth=0.25ex] { 
|[name=11]| \mb{A}_{\mc{G}} \times \mc{L} & |[name=12]| \mc{L} \\ 
|[name=21]| \G_{m,\mc{G}} \times \mc{L} & |[name=22]| \mc{L} \\
}; 
\draw[->,font=\scriptsize] (11) edge (12) (21) edge (22) (11) edge node[left=-1pt] {$\pi^{\ast}\chi_{\mc{L}} \times \id_{\mc{L}}$} (21) (12) edge node[right=-1pt] {$\id_{\mc{L}}$} (22); \end{tikzpicture} \end{equation} commutes, where the top row is the inertial action and the bottom row is the restriction of the $\mc{O}_{\mc{G}}$-module structure on $\mc{L}$. The condition that \labelcref{20171214-01-eqn-02} commutes is equivalent to the condition \labelcref{20171214-06-eqn-01}, in other words $\mc{L}$ is a $\chi_{\mc{L}}$-twisted sheaf. For two invertible $\mc{O}_{\mc{G}}$-modules $\mc{L}_{1},\mc{L}_{2}$ we have $\chi_{\mc{L}_{1} \otimes \mc{L}_{2}} = \chi_{\mc{L}_{1}} \cdot \chi_{\mc{L}_{2}}$ by \cite[5.3.6 (2)]{BROCHARD-FDPDCA}, hence the assignment $\mc{L} \mapsto \chi_{\mc{L}}$ defines a group homomorphism \[ \beta_{\mc{G}} : \Pic(\mc{G}) \to \widehat{\mb{A}} \] of abelian groups. By \cite[5.3.6 (3)]{BROCHARD-FDPDCA}, we have that $\chi_{\mc{L}} = 0$ if and only if $\mc{L}$ is of the form $\pi^{\ast}M$ for an invertible $\mc{O}_{\mc{S}}$-module $M$; in other words there is an exact sequence \begin{align} \label{20171214-01-eqn-01} 0 \to \Pic(\mc{S}) \stackrel{\pi^{\ast}}{\to} \Pic(\mc{G}) \stackrel{\beta_{\mc{G}}}{\to} \widehat{\mb{A}} \end{align} where injectivity of $\pi^{\ast}$ follows from \Cref{20171214-10}. The sequence \labelcref{20171214-01-eqn-01} is functorial on $\mc{S}$ in the following sense: if $p : \mc{T} \to \mc{S}$ is a morphism of locally ringed sites and $\pi_{\mc{T}} : \mc{G}_{\mc{T}} \to \mc{T}$ is the $\mb{A}_{\mc{T}}$-gerbe obtained by pullback, then the diagram \begin{center}\begin{tikzpicture}[>=angle 90] 
\matrix[matrix of math nodes,row sep=2em, column sep=2em, text height=1.8ex, text depth=0.25ex] { 
|[name=11]| 0 & |[name=12]| \Pic(\mc{S}) & |[name=13]| \Pic(\mc{G}) & |[name=14]| \widehat{\mb{A}} \\ 
|[name=21]| 0 & |[name=22]| \Pic(\mc{T}) & |[name=23]| \Pic(\mc{G}_{\mc{T}}) & |[name=24]| \widehat{\mb{A}}_{\mc{T}} \\ 
}; 
\draw[->,font=\scriptsize] (11) edge (12) (12) edge node[above=-1pt] {$\pi^{\ast}$} (13) (13) edge node[above=-1pt] {$\beta_{\mc{G}}$} (14) (21) edge (22) (22) edge node[below=-1pt] {$\pi_{\mc{T}}^{\ast}$} (23) (23) edge node[below=-1pt] {$\beta_{\mc{G}_{\mc{T}}}$} (24) (12) edge node[left=-1pt] {$p^{\ast}$} (22) (13) edge node[left=-1pt] {$p^{\ast}$} (23) (14) edge node[left=-1pt] {$p^{\ast}$} (24); \end{tikzpicture} \end{center} commutes. \par In case $\mc{G} := \mr{B}\mb{A}$, by \cite[5.3.7]{BROCHARD-FDPDCA} the map $\beta_{\mc{G}}$ is surjective and the sequence \labelcref{20171214-01-eqn-01} is split; the map $\beta_{\mc{G}}$ admits a natural section $\widehat{\mb{A}} \to \Pic(\mc{G})$ taking a character $\chi$ to the trivial $\mc{O}_{\mc{S}}$-module equipped with the $\mb{A}$-action corresponding to $\chi$ via the isomorphism $\G_{m,\mc{S}} \simeq \underline{\Aut}_{\mc{O}_{\mc{S}}}(\mc{O}_{\mc{S}})$. \end{remark}

\begin{lemma} \label{20171214-02} Let $\mc{S}$ be a locally ringed site, let $\mb{A}$ be an abelian sheaf on $\mc{S}$, let $\pi : \mc{G} \to \mc{S}$ be an $\mb{A}$-gerbe. There is a natural isomorphism \begin{align} \label{20171214-02-eqn-01} \mb{R}^{1}\pi_{\ast}\G_{m,\mc{G}} \simeq \underline{\Hom}_{\mr{Ab}(\mc{S})}(\mb{A},\G_{m,\mc{S}}) \end{align} of abelian sheaves on $\mc{S}$. \end{lemma}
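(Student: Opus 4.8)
The plan is to realize both sides of \labelcref{20171214-02-eqn-01} as sheaves associated to presheaves on $\mc{S}$ and to compare them using the Picard group sequence \labelcref{20171214-01-eqn-01}. By definition $\mb{R}^{1}\pi_{\ast}\G_{m,\mc{G}}$ is the sheaf on $\mc{S}$ associated to the presheaf $U \mapsto \H^{1}_{\et}(\mc{G}_{U},\G_{m,\mc{G}_{U}}) = \Pic(\mc{G}_{U})$, where $\mc{G}_{U} := \mc{G} \times_{\mc{S}} U$ is the pulled-back $\mb{A}_{U}$-gerbe over $\mc{S}/U$. The target $\widehat{\mb{A}}$, being an internal $\underline{\Hom}$, is already a sheaf, and its value on $U$ is the character group $\widehat{\mb{A}_{U}}$ of the restriction.

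First I would assemble the homomorphisms $\beta_{\mc{G}_{U}}$ of \Cref{20171214-01} into a morphism of presheaves. By the functoriality of \labelcref{20171214-01-eqn-01} recorded there, the assignment $U \mapsto \beta_{\mc{G}_{U}}$ defines a morphism of presheaves fitting into an exact sequence of presheaves
\[ 0 \to \left( U \mapsto \Pic(U) \right) \stackrel{\pi^{\ast}}{\to} \left( U \mapsto \Pic(\mc{G}_{U}) \right) \stackrel{\beta}{\to} \left( U \mapsto \widehat{\mb{A}}(U) \right) , \]
exactness at each $U$ being \labelcref{20171214-01-eqn-01} for the gerbe $\mc{G}_{U}$. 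Applying the exact sheafification functor, and using that $\widehat{\mb{A}}$ is already a sheaf, produces a natural morphism of sheaves $\mb{R}^{1}\pi_{\ast}\G_{m,\mc{G}} \to \widehat{\mb{A}}$; this will be the map \labelcref{20171214-02-eqn-01}.

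To prove injectivity I would note that the sheafification of the leftmost presheaf $U \mapsto \Pic(U) = \H^{1}_{\et}(U,\G_{m,\mc{S}})$ is $\mb{R}^{1}(\id)_{\ast}\G_{m,\mc{S}} = 0$: since every invertible $\mc{O}_{\mc{S}}$-module is locally isomorphic to $\mc{O}_{\mc{S}}$, every section of this presheaf vanishes after restriction to a covering. As sheafification is exact, the left-hand term dies and we are left with a left-exact sequence $0 \to \mb{R}^{1}\pi_{\ast}\G_{m,\mc{G}} \to \widehat{\mb{A}}$, giving injectivity of \labelcref{20171214-02-eqn-01}.

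Finally, surjectivity may be checked locally on $\mc{S}$. Given $U \in \mc{S}$ and a section $\chi \in \widehat{\mb{A}}(U)$, condition (ii) of \Cref{20171214-04} provides a covering $\{U_{i} \to U\}$ with $\mc{G}(U_{i}) \ne \emptyset$, so that each $\mc{G}_{U_{i}}$ is a trivial gerbe $\mc{G}_{U_{i}} \simeq \mr{B}\mb{A}_{U_{i}}$. For a trivial gerbe the last paragraph of \Cref{20171214-01} supplies a section of $\beta_{\mc{G}_{U_{i}}}$, so $\chi|_{U_{i}}$ lifts to a class in $\Pic(\mc{G}_{U_{i}})$ and hence to a local section of $\mb{R}^{1}\pi_{\ast}\G_{m,\mc{G}}$ over $U_{i}$ mapping to $\chi|_{U_{i}}$. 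This makes \labelcref{20171214-02-eqn-01} surjective, hence an isomorphism. I expect the only delicate points to be the identification of $\mb{R}^{1}\pi_{\ast}\G_{m,\mc{G}}$ with the sheafified Picard presheaf and the vanishing $\mb{R}^{1}(\id)_{\ast}\G_{m,\mc{S}} = 0$; everything else is formal once \Cref{20171214-01} is in hand.
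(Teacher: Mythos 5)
Your proposal is correct and takes essentially the same route as the paper: both assemble the maps $\beta_{\mc{G}_{U}}$ of \Cref{20171214-01} into an exact sequence of abelian presheaves as $U$ varies and then sheafify. You simply make explicit the steps the paper leaves implicit, namely the identification of $\mb{R}^{1}\pi_{\ast}\G_{m,\mc{G}}$ with the sheafified Picard presheaf, the vanishing of the sheafification of $U \mapsto \Pic(\mc{S}/U)$, and the local surjectivity of $\beta$ coming from triviality of the gerbe on a covering.
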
 \begin{proof} Let $U \in \mc{S}$ be an object. Taking $\mc{T} := \mc{S}/U$ and $p : \mc{S}/U \to \mc{S}$ the inclusion of categories, we obtain an exact sequence \begin{align} \label{20171214-02-eqn-02} 0 \to \Pic(\mc{S}/U) \stackrel{\pi_{\mc{S}/U}^{\ast}}{\to} \Pic(\mc{G}_{\mc{S}/U}) \stackrel{\beta_{\mc{G}_{\mc{S}/U}}}{\to} \widehat{\mb{A}}_{\mc{S}/U} \end{align} of abelian groups. Letting $U$ range over the objects of $\mc{S}$, we obtain an exact sequence of abelian presheaves whose value on $U$ is \labelcref{20171214-02-eqn-02}, and sheafifying this sequence gives the desired isomorphism. \end{proof}

We specialize to the case $\mb{A} = \G_{m,\mc{S}}$.

\begin{proposition} \label{20171214-03} Let $\mc{S}$ be a locally ringed site and let $\pi : \mc{G} \to \mc{S}$ be a $\G_{m,\mc{S}}$-gerbe. Let \begin{align} \label{20171214-03-eqn-01} \mr{d}_{2}^{0,1} : \H^{0}(\mc{S} , \mb{R}^{1}\pi_{\ast}\G_{m,\mc{G}}) \to \H^{2}(\mc{S} , \mb{R}^{0}\pi_{\ast}\G_{m,\mc{G}}) \end{align} be the differential in the Leray spectral sequence associated to the map $\pi$ and sheaf $\G_{m,\mc{G}}$. Under the identification \labelcref{20171214-02-eqn-01}, the differential $\mr{d}_{2}^{0,1}$ sends the identity $\id_{\G_{m,\mc{S}}} \in \Hom_{\mr{Ab}(\mc{S})}(\G_{m,\mc{S}},\G_{m,\mc{S}})$ to the class $[\mc{G}] \in \H^{2}(\mc{S} , \G_{m,\mc{S}})$. \end{proposition}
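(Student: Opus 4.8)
The plan is to unwind the definition of the differential $\mr{d}_2^{0,1}$ in the Leray spectral sequence and match it against an explicit cocycle description of the class $[\mc{G}]$. Recall that $[\mc{G}] \in \H^2(\mc{S},\G_{m,\mc{S}})$ is, by definition, the obstruction to the existence of a global object of the gerbe $\pi : \mc{G} \to \mc{S}$: choosing a covering $\{U_i \to \ast\}$ on which $\mc{G}$ has objects $x_i$, isomorphisms $x_i|_{U_{ij}} \simeq x_j|_{U_{ij}}$, and measuring the failure of the cocycle condition gives a \v{C}ech $2$-cocycle valued in $\G_{m,\mc{S}}$ (using the identification $\iota : \G_{m,\mc{S}} \simeq I_{\mc{G}/\mc{S}}$). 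My goal is to show that running the identity character $\id_{\G_{m,\mc{S}}}$ through the transgression produces exactly this cocycle.

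First I would recall the explicit formula for the transgression (edge) differential $\mr{d}_2^{0,1}$ on a class in $\H^0(\mc{S}, \mb{R}^1\pi_\ast \G_{m,\mc{G}})$ that is represented by a global section of the sheaf $\mb{R}^1\pi_\ast\G_{m,\mc{G}}$. Via the isomorphism \labelcref{20171214-02-eqn-01} of \Cref{20171214-02}, the section $\id_{\G_{m,\mc{S}}}$ corresponds locally, over each $U \in \mc{S}$ on which $\mc{G}$ is trivial, to an invertible $\mc{O}_{\mc{G}_U}$-module $\mc{L}_U$ whose associated character $\chi_{\mc{L}_U}$ is the identity; concretely, under the splitting of \labelcref{20171214-01-eqn-01} for the trivial gerbe described at the end of \Cref{20171214-01}, this is the canonical $1$-twisted line bundle. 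Thus locally the identity section lifts to an honest element of $\Pic(\mc{G}_U)$, namely a line bundle $\mc{L}_U$, and the transgression is computed by comparing these local lifts on overlaps: on $U_{ij}$ the two restrictions $\mc{L}_i|_{U_{ij}}$ and $\mc{L}_j|_{U_{ij}}$ have the same character, so by exactness of \labelcref{20171214-01-eqn-01} their difference descends to an element of $\Pic(\mc{S}/U_{ij})$, and the resulting $2$-coboundary in $\Pic$, pushed into $\H^2(\mc{S},\mb{R}^0\pi_\ast\G_{m,\mc{G}}) = \H^2(\mc{S},\G_{m,\mc{S}})$, is the value of $\mr{d}_2^{0,1}(\id)$.

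The crux is then to identify this $2$-cocycle with the gerbe class $[\mc{G}]$. I would choose the covering to be one that simultaneously trivializes $\mc{G}$ (giving objects $x_i \in \mc{G}(U_i)$) and supports the canonical $1$-twisted bundles $\mc{L}_i$; the natural choice is $\mc{L}_i = \mc{O}_{\mc{G}_{U_i}}$ made $1$-twisted via the inertial action through the trivialization $x_i$, exactly as in the section $\widehat{\mb{A}} \to \Pic(\mr{B}\mb{A})$ of \Cref{20171214-01}. The gluing isomorphisms $x_i|_{U_{ij}} \simeq x_j|_{U_{ij}}$ then induce comparison isomorphisms between the $\mc{L}_i$, and the automorphism by which the two ways of identifying $\mc{L}_i|_{U_{ijk}}$ with $\mc{L}_k|_{U_{ijk}}$ differ is precisely the $\G_{m}$-valued $2$-cocycle defining $[\mc{G}]$, because the discrepancy is an automorphism of an object of $\mc{G}$ and the character is the identity, so $\iota$ sends it to itself. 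In effect the identity character transports the \v{C}ech gerbe cocycle verbatim.

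The main obstacle I anticipate is bookkeeping rather than conceptual: one must be careful that the several natural isomorphisms — the Brochard identification $\mc{L} \mapsto \chi_{\mc{L}}$, the splitting for trivial gerbes, the sheafification in \Cref{20171214-02}, and the standard formula for the transgression differential — are all normalized with compatible signs and orientations, so that the cocycle one extracts is $[\mc{G}]$ and not, say, $-[\mc{G}]$ or a shifted representative. To control this I would work throughout with a single fixed hypercovering/\v{C}ech setup and track the counit $\pi^\ast\pi_\ast\ms{F} \to \ms{F}$ of \Cref{20171214-09} explicitly, reducing every comparison to the trivial-gerbe case via \Cref{20171214-08}, where all maps are given by forgetting or imposing the $\G_m$-action and the computation becomes the classical identification of the transgression of a connecting section with the gerbe's \v{C}ech class.
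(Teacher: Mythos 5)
Your third paragraph contains the correct computational core, but the scaffolding around it has a genuine gap: you treat as ``recalled facts'' the two \v{C}ech-level statements that are the entire content of the proposition, namely (a) that $[\mc{G}]$ is represented by the obstruction $2$-cocycle $(g_{ijk})$ built from local objects $x_i$ and isomorphisms $\varphi_{ij}$, and (b) that $\mr{d}_{2}^{0,1}$ applied to a global section of $\mb{R}^{1}\pi_{\ast}\G_{m,\mc{G}}$ is computed by your lift-locally-and-compare recipe. The proposition is stated over an arbitrary locally ringed site, where \v{C}ech cohomology with respect to ordinary coverings does not in general compute derived-functor $\H^{2}$, and where $[\mc{G}]$ is not defined by a cocycle at all but via Giraud's correspondence between $\G_{m}$-gerbes and classes in $\H^{2}(\mc{S},\G_{m,\mc{S}})$. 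So (a) and (b) are not definitions or standard formulas one may simply recall; establishing them (or circumventing them) is precisely the nontrivial input. This is what the paper's citation of \cite[V, 3.2.1]{GIRAUD-CN} supplies: Giraud's theorem identifies $\mr{d}_{2}^{0,1}(c)$ with the class of the gerbe $D(c)$ of local invertible sheaves lifting $c$, with no coverings or cocycles involved. Your closing appeal to ``a single fixed hypercovering'' points at a possible repair (via the hypercovering theorem), but that argument is never carried out, and the sign/normalization bookkeeping you single out as the main obstacle is not actually the hard part. Relatedly, your second-paragraph recipe is not well formed as stated: a ``$2$-coboundary'' of classes $M_{ij} \in \Pic(\mc{S}/U_{ij})$ is not an element of $\H^{2}(\mc{S},\G_{m,\mc{S}})$; to extract a $\G_{m}$-valued cocycle one needs gluing isomorphisms $M_{ij} \otimes M_{jk} \simeq M_{ik}$ on triple overlaps together with further trivializations (your third paragraph implicitly fixes this by choosing the lifts so that all the $M_{ij}$ are trivialized by the $\varphi_{ij}$).

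For comparison, the paper's proof has two steps: it quotes \cite[V, 3.2.1]{GIRAUD-CN} to get $\mr{d}_{2}^{0,1}(c) = [D(c)]$, then observes via the construction of the isomorphism \labelcref{20171214-02-eqn-01} that for $c = \id_{\G_{m,\mc{S}}}$ the gerbe $D(c)$ is exactly the stack of $1$-twisted invertible $\mc{O}_{\mc{G}}$-modules, and finally quotes \cite[Proposition 2.1.2.5]{LIEBLICH-THESIS} for the equivalence $\mc{G} \simeq D(c)$. Your triple-overlap computation --- the canonical weight-one bundles attached to the $x_i$ glue along the $\varphi_{ij}$ up to the automorphism $g_{ijk}$, which acts by multiplication by $g_{ijk}$ because the character is the identity --- is precisely the explicit content of that last equivalence. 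So if you replace your assumed \v{C}ech formulas by the citation of Giraud (or by an honest hypercovering argument proving them), your proposal becomes a correct, more hands-on version of the paper's proof; as written, the step doing the real work is missing.
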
 \begin{proof} Let $c \in \H^{0}(\mc{S} , \mb{R}^{1}\pi_{\ast}\G_{m,\mc{G}})$ be the class corresponding to the identity section $\chi := \id_{\G_{m,\mc{S}}} \in \Hom_{\mr{Ab}(\mc{S})}(\G_{m,\mc{S}},\G_{m,\mc{S}})$ via the isomorphism \labelcref{20171214-02-eqn-01}. Let $D(c) \to \mc{S}$ denote the category fibered in groupoids whose fiber category $(D(c))(U)$ for an object $U \in \mc{S}$ consists of the invertible $\mc{O}_{\mc{G}_{\mc{S}/U}}$-modules whose image under the map \[ \H^{1}(\mc{G}_{\mc{S}/U} , \G_{m,\mc{S}}) \to \H^{0}(U , \mb{R}^{1}\pi_{\ast}\G_{m,\mc{G}}) \] is equal to the image of $c$ under the restriction map \[ \H^{0}(\mc{S} , \mb{R}^{1}\pi_{\ast}\G_{m,\mc{G}}) \to \H^{0}(U , \mb{R}^{1}\pi_{\ast}\G_{m,\mc{G}}) \] of the sheaf $\mb{R}^{1}\pi_{\ast}\G_{m,\mc{G}}$. By \cite[V, 3.2.1]{GIRAUD-CN}, the category $D(c)$ is a $\G_{m,\mc{S}}$-gerbe, and the assignment $c \mapsto [D(c)]$ coincides with the differential \labelcref{20171214-03-eqn-01}. By the above description of $D(c)$ and by the definition of the isomorphism \labelcref{20171214-02-eqn-01} as the one obtained by sheafifying the maps $\beta_{\mc{G}_{\mc{S}/U}}$ in \labelcref{20171214-02-eqn-02}, we have that an invertible $\mc{O}_{\mc{G}_{\mc{S}/U}}$-module $\mc{L}$ is contained in $(D(c))(U)$ if and only if it is $\chi|_{\mc{S}/U}$-twisted. By \cite[Proposition 2.1.2.5]{LIEBLICH-THESIS}, we have that there is an isomorphism $\mc{G} \simeq D(c)$ of $\G_{m,\mc{S}}$-gerbes. \end{proof}

\section{Picard groups of (Laurent) polynomial rings} \label{sec-03}

In this section we prove \Cref{20170930-04}. For us, the main difficulty is that there are rings $A$ for which the pullback map $\Pic(A) \to \Pic(A[t])$ is not an isomorphism. The ring $A$ is called \emph{seminormal} \cite[p. 210]{SWAN-ONSEMINORMALITY}, \cite[p. 29]{WEIBEL-KBOOK} if for every $b,c \in A$ satisfying $b^{3} = c^{2}$ there exists $a \in A$ such that $a^{2} = b$ and $a^{3} = c$. Seminormal rings are automatically reduced \cite[VIII, \S7]{LAM-SERRES-PROBLEM}. By Traverso's theorem \cite[Theorem 3.6]{TRAVERSO-SAPG}, \cite[Theorem 3.11]{WEIBEL-KBOOK}, the map $\Pic(A) \to \Pic(A[t])$ is an isomorphism if and only if the reduction $A_{\mr{red}}$ is a seminormal ring. Taking the strict henselization of the cuspidal cubic $k[x,y]/(y^{2} = x^{3})$ at the cusp gives an example of a reduced strictly henselian local ring $A$ which is not seminormal; by \Cref{20170930-26}, in this case we also have $\Pic(A[t,t^{-1}]) \ne 0$.

Throughout this section and \Cref{sec-04}, we will use \Cref{20170930-33} and \Cref{20170930-14}.

\begin{notation}[$\Delta,\mr{C}^{\bullet}\mb{G},\mathsf{h}^{n}(\mr{C}^{\bullet}\mb{G})$] \label{20170930-33} Let $\Delta$ be the category with objects $[n] := \{0,\dotsc,n\}$ for each nonnegative integer $n \ge 0$ and whose morphisms $\varphi : [m] \to [n]$ correspond to nondecreasing maps $\varphi : \{0,\dotsc,m\} \to \{0,\dotsc,n\}$. For $n \ge 0$ and $0 \le i \le n+1$, we denote $\delta_{i}^{n} : [n] \to [n+1]$ the injective nondecreasing map whose image does not contain $i$. For $n \ge 0$ and $0 \le i \le n$, we denote $\sigma_{i}^{n} : [n+1] \to [n]$ the surjective nondecreasing map satisfying $(\sigma_{i}^{n})^{-1}(i) = \{i,i+1\}$. A cosimplicial set (resp. abelian group, resp. ring) is a covariant functor from $\Delta$ to $(\mr{Set})$ (resp. $(\mr{Ab})$, resp. $(\mr{Ring})$). \par If $\mb{G}$ is a cosimplicial abelian group, we denote by \[ \mr{C}^{\bullet}\mb{G} \] the cochain complex where $\mr{C}^{n}\mb{G} := \mb{G}([n])$ for $n \ge 0$ and where the $n$th differential $\mb{d}_{\mb{G}}^{n} : \mr{C}^{n}\mb{G} \to \mr{C}^{n+1}\mb{G}$ is the alternating sum $\sum_{i=0}^{n+1} \mb{G}(\delta_{i}^{n})$. We denote by \[ \mathsf{h}^{n}(\mr{C}^{\bullet}\mb{G}) \] the cohomology of $\mr{C}^{\bullet}\mb{G}$ at $\mr{C}^{n}\mb{G}$. \end{notation}

\begin{notation}[$\mb{L}_{A},\mb{P}_{A}$] \label{20170930-14} Let $A$ be a ring, let $\pi : \mr{B}\G_{m,A} \to \Spec A$ be the trivial $\G_{m,A}$-gerbe, let $\xi : \Spec A \to \mr{B}\G_{m,A}$ be the section of $\pi$ corresponding to the trivial $\G_{m,A}$-torsor. Taking 2-fiber products of $\xi$, we obtain a cosimplicial $A$-algebra \[ \mb{L}_{A} : \Delta \to (A\text{-alg}) \] where \[ \mb{L}_{A}([p]) := A[t_{1}^{\pm},\dotsc,t_{p}^{\pm}] \] is the Laurent polynomial ring in $p$ indeterminates over $A$ (where by convention $\mb{L}_{A}([0]) := A$). For $p \ge 0$ and $0 \le i \le p+1$, the $i$th degeneracy map $\mb{L}_{A}(\delta_{i}^{p}) : \mb{L}_{A}([p]) \to \mb{L}_{A}([p+1])$ is the $A$-algebra map sending $(t_{1},\dotsc,t_{p}) \mapsto (t_{1},\dotsc,t_{i}t_{i+1},\dotsc,t_{p+1})$ where by abuse of notation we write ``$t_{0}$'' and ``$t_{p+2}$'' to mean ``$1$'' (in the cases $i=0$ and $i=p+1$ respectively). For $p \ge 0$ and $0 \le i \le p$, the $i$th face map $\mb{L}_{A}(\sigma_{i}^{p}) : \mb{L}_{A}([p+1]) \to \mb{L}_{A}([p])$ is the $A$-algebra map sending $(t_{1},\dotsc,t_{p+1}) \mapsto (t_{1},\dotsc,t_{i},1,t_{i+1},\dotsc,t_{p})$. \par We also have the cosimplicial $A$-algebra \[ \mb{P}_{A} : \Delta \to (A\text{-alg}) \] where \[ \mb{P}_{A}([p]) := A[t_{1},\dotsc,t_{p}] \] is the polynomial ring in $p$ indeterminates over $A$, viewed as the subalgebra of $\mb{L}_{A}([p])$, and for which the $A$-algebra map $\mb{P}_{A}(\varphi) : \mb{P}_{A}([m]) \to \mb{P}_{A}([n])$ is obtained by restricting $\mb{L}_{A}(\varphi) : \mb{L}_{A}([m]) \to \mb{L}_{A}([n])$. \par We make the formulas $\mb{P}_{A}(\delta_{i}^{p})$ and $\mb{P}_{A}(\sigma_{i}^{p})$ explicit for $p=0,1$. For $0 \le i \le 1$, the $A$-algebra map $\mb{P}_{A}(\delta_{i}^{0}) : A \to A[t_{1}]$ is the unique one. For $0 \le i \le 2$, the $A$-algebra map $\mb{P}_{A}(\delta_{i}^{1}) : A[t_{1}] \to A[t_{1},t_{2}]$ sends $t_{1}$ to $t_{1},t_{1}t_{2},t_{2}$ respectively. For $0 \le i \le 0$, the $A$-algebra map $\mb{P}_{A}(\sigma_{i}^{0}) : A[t_{1}] \to A$ sends $t_{1}$ to $1$. For $0 \le i \le 1$, the $A$-algebra map $\mb{P}_{A}(\sigma_{i}^{1}) : A[t_{1},t_{2}] \to A[t_{1}]$ sends $(t_{1},t_{2})$ to $(1,t_{1}),(t_{1},1)$ respectively. \end{notation}

\begin{notation}[$\mr{N}_{x}\mr{F},\mr{N}_{x_{1},x_{2}}\mr{F}$] \label{20170930-34} Given a functor \begin{align} \label{20170930-14-eqn-01} \mr{F} : (\mr{Ring}) \to (\mr{Ab}) \end{align} we define new functors \[ \mr{N}_{x}\mr{F} , \mr{N}_{x_{1},x_{2}}\mr{F} : (\mr{Ring}) \to (\mr{Ab}) \] by \begin{align*} \mr{N}_{x}\mr{F}(A) &:= \ker(\mr{F}(x=1) : \mr{F}(A[x]) \to \mr{F}(A)) \\ \mr{N}_{x_{1},x_{2}}\mr{F}(A) &:= \ker((\mr{F}(x_{2}=1),\mr{F}(x_{1}=1)) : \mr{F}(A[x_{1},x_{2}]) \to \mr{F}(A[x_{1}]) \oplus \mr{F}(A[x_{2}])) \end{align*} for any ring $A$, where $x,x_{1},x_{2}$ are indeterminates. The notation ``$\mr{N}_{x}\mr{F}$'' was defined by Weibel in \cite[\S1]{WEIBEL-PIACF}. \end{notation}

The operation ``$\mr{N}_{x}$'' can be iterated, for example if $x_{1},x_{2}$ are indeterminates, then $\mr{N}_{x_{1}}(\mr{N}_{x_{2}}\mr{F})$ is a functor $(\mr{Ring}) \to (\mr{Ab})$.

\begin{lemma} \label{20170930-35} In \Cref{20170930-34}, we have \[ \mr{N}_{x_{1},x_{2}}\mr{F}(A) = \mr{N}_{x_{1}}(\mr{N}_{x_{2}}\mr{F})(A) = \mr{N}_{x_{2}}(\mr{N}_{x_{1}}\mr{F})(A) \] for any ring $A$. \end{lemma}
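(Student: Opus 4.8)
The plan is to realize all three groups as subgroups of $\mr{F}(A[x_1,x_2])$ and to check that each iterated construction cuts out the same intersection of two kernels. By the symmetry in $x_1$ and $x_2$ it suffices to prove the first equality $\mr{N}_{x_1,x_2}\mr{F}(A) = \mr{N}_{x_1}(\mr{N}_{x_2}\mr{F})(A)$, since the second follows by interchanging the roles of $x_1$ and $x_2$.

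First I would fix notation for the $A$-algebra maps out of $R := A[x_1,x_2]$: let $p_1 : R \to A[x_2]$ send $x_1 \mapsto 1$, let $p_2 : R \to A[x_1]$ send $x_2 \mapsto 1$, let $q_1 : A[x_1] \to A$ send $x_1 \mapsto 1$, and let $q_2 : A[x_2] \to A$ send $x_2 \mapsto 1$. The crucial compatibility is that $q_2 \circ p_1 = q_1 \circ p_2$, both being the map $r : R \to A$ sending $x_1,x_2 \mapsto 1$. Unwinding \Cref{20170930-34}, we have $\mr{N}_{x_1,x_2}\mr{F}(A) = \ker \mr{F}(p_1) \cap \ker \mr{F}(p_2)$ as subgroups of $\mr{F}(R)$.

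Next, writing $\mr{G} := \mr{N}_{x_2}\mr{F}$, I would unwind the groups occurring in $\mr{N}_{x_1}\mr{G}(A) = \ker(\mr{G}(q_1) : \mr{G}(A[x_1]) \to \mr{G}(A))$. Since $A[x_1][x_2] = R$, we have $\mr{G}(A[x_1]) = \ker \mr{F}(p_2) \subseteq \mr{F}(R)$ and $\mr{G}(A) = \ker \mr{F}(q_2) \subseteq \mr{F}(A[x_2])$, and by functoriality of $\mr{G}$ the map $\mr{G}(q_1)$ is the restriction of $\mr{F}(p_1) : \mr{F}(R) \to \mr{F}(A[x_2])$ to these subgroups. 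The compatibility $q_2 \circ p_1 = q_1 \circ p_2$ is exactly what guarantees that $\mr{F}(p_1)$ carries $\ker \mr{F}(p_2)$ into $\ker \mr{F}(q_2) = \mr{G}(A)$, so that $\mr{G}(q_1)$ is well defined. Since $\mr{G}(A)$ is a subgroup of $\mr{F}(A[x_2])$, an element $u \in \mr{G}(A[x_1])$ lies in $\ker \mr{G}(q_1)$ if and only if $\mr{F}(p_1)(u) = 0$ in $\mr{F}(A[x_2])$; hence $\mr{N}_{x_1}\mr{G}(A) = \ker \mr{F}(p_2) \cap \ker \mr{F}(p_1) = \mr{N}_{x_1,x_2}\mr{F}(A)$, as desired.

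The argument is essentially bookkeeping and I do not anticipate a serious obstacle. The only point requiring genuine care is the functoriality identification in the last paragraph: confirming that the outer evaluation ``$x_{1}=1$'' applied to $\mr{N}_{x_2}\mr{F}$ is computed by the single ring map $p_1 : R \to A[x_2]$, and that the compatibility of the square of evaluation maps places its image in the correct kernel. Once this is in place, both equalities reduce to the observation that forming an iterated kernel is the same as intersecting the two individual kernels inside $\mr{F}(R)$.
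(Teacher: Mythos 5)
Your proposal is correct and is in essence the same argument as the paper's: both identify all three groups with the intersection $\ker \mr{F}(p_{1}) \cap \ker \mr{F}(p_{2})$ inside $\mr{F}(A[x_{1},x_{2}])$, the only difference being that the paper packages this bookkeeping into a $3 \times 3$ commutative diagram with (split) exact rows and columns, whereas you verify the identification of the iterated kernel directly via the compatibility $q_{2} \circ p_{1} = q_{1} \circ p_{2}$. No gap; the functoriality point you flag (that the outer evaluation on $\mr{N}_{x_{2}}\mr{F}$ is computed by $\mr{F}(p_{1})$ restricted to $\ker \mr{F}(p_{2})$) is exactly the content of the middle column/row of the paper's diagram.
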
 \begin{proof} The claim follows from considering the commutative diagram \begin{center}\begin{tikzpicture}[>=angle 90] 
\matrix[matrix of math nodes,row sep=2.5em, column sep=2em, text height=1.5ex, text depth=0.25ex] { 
|[name=00]| & |[name=01]| 0 & |[name=02]| 0 & |[name=03]| 0 & |[name=04]| \\
|[name=10]| 0 & |[name=11]| \mr{N}_{x_{1},x_{2}}\mr{F}(A) & |[name=12]| \mr{N}_{x_{1}}\mr{F}(A[x_{2}]) & |[name=13]| \mr{N}_{x_{1}}\mr{F}(A) & |[name=14]| 0 \\ 
|[name=20]| 0 & |[name=21]| \mr{N}_{x_{2}}\mr{F}(A[x_{1}]) & |[name=22]| \mr{F}(A[x_{1},x_{2}]) & |[name=23]| \mr{F}(A[x_{1}]) & |[name=24]| 0 \\ 
|[name=30]| 0 & |[name=31]| \mr{N}_{x_{2}}\mr{F}(A) & |[name=32]| \mr{F}(A[x_{2}]) & |[name=33]| \mr{F}(A) & |[name=34]| 0 \\ 
|[name=40]| & |[name=41]| 0 & |[name=42]| 0 & |[name=43]| 0 & |[name=44]| \\ 
}; 
\draw[->,font=\scriptsize]
(01) edge (11) (02) edge (12) (03) edge (13) (31) edge (41) (32) edge (42) (33) edge (43) (10) edge (11) (13) edge (14) (20) edge (21) (23) edge (24) (30) edge (31) (33) edge (34)
(11) edge (12) (12) edge node[above=-1pt] {$x_{2}=1$} (13)
(21) edge (22) (22) edge node[above=-1pt] {$x_{2}=1$} (23)
(31) edge (32) (32) edge node[above=-1pt] {$x_{2}=1$} (33)
(11) edge (21) (21) edge node[left=-1pt] {$x_{1}=1$} (31)
(12) edge (22) (22) edge node[left=-1pt] {$x_{1}=1$} (32)
(13) edge (23) (23) edge node[left=-1pt] {$x_{1}=1$} (33); \end{tikzpicture} \end{center} where each row and column is (split) exact. \end{proof}

\begin{lemma} \label{20170930-23} Assume \Cref{20170930-33}, \Cref{20170930-14}, and \Cref{20170930-34}. We have \[ \mb{d}_{\mr{F}\mb{P}_{A}}^{1}(\mr{N}_{t_{1}}\mr{F}(A)) \subset \mr{N}_{t_{1},t_{2}}\mr{F}(A) \] for any ring $A$. \end{lemma}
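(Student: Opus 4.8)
The plan is to unwind the definitions of $\mr{N}_{t_1}\mr{F}$, $\mr{N}_{t_1,t_2}\mr{F}$, and the differential $\mb{d}^1_{\mr{F}\mb{P}_A}$, and then verify the containment by a direct chase on the face maps of $\mb{P}_A$. Recall that $\mr{N}_{t_1}\mr{F}(A) = \ker(\mr{F}(t_1=1)\colon \mr{F}(A[t_1]) \to \mr{F}(A))$, while $\mr{N}_{t_1,t_2}\mr{F}(A)$ is the kernel of the map $\mr{F}(A[t_1,t_2]) \to \mr{F}(A[t_1]) \oplus \mr{F}(A[t_2])$ given by $(\mr{F}(t_2=1),\mr{F}(t_1=1))$. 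The differential $\mb{d}^1_{\mr{F}\mb{P}_A}\colon \mr{F}(A[t_1]) \to \mr{F}(A[t_1,t_2])$ is the alternating sum $\mr{F}(\mb{P}_A(\delta^1_0)) - \mr{F}(\mb{P}_A(\delta^1_1)) + \mr{F}(\mb{P}_A(\delta^1_2))$, where by \Cref{20170930-14} the maps $\mb{P}_A(\delta^1_i)$ send $t_1 \mapsto t_1,\ t_1 t_2,\ t_2$ respectively.

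The core of the argument is to compose $\mb{d}^1_{\mr{F}\mb{P}_A}$ with each of the two projection maps $\mr{F}(t_2=1)$ and $\mr{F}(t_1=1)$ defining $\mr{N}_{t_1,t_2}\mr{F}(A)$, and show that both composites annihilate any element of $\mr{N}_{t_1}\mr{F}(A)$. First I would analyze the composite with $\mr{F}(t_2=1)$. Setting $t_2=1$ in the three face maps, the assignments $t_1 \mapsto t_1,\ t_1 t_2,\ t_2$ specialize to $t_1 \mapsto t_1,\ t_1,\ 1$; so the composite $\mr{F}(t_2=1)\circ \mb{d}^1_{\mr{F}\mb{P}_A}$ equals $\id - \id + \mr{F}(t_1=1)\circ(\text{constant})$. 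The point is that the third term factors through $\mr{F}(t_1=1)$, which vanishes on $\mr{N}_{t_1}\mr{F}(A)$ by definition, while the first two terms cancel; hence this composite is zero on $\mr{N}_{t_1}\mr{F}(A)$. The composite with $\mr{F}(t_1=1)$ is entirely symmetric: setting $t_1=1$ specializes $t_1 \mapsto t_1,\ t_1 t_2,\ t_2$ to $t_1 \mapsto 1,\ t_2,\ t_2$, and an identical bookkeeping of the alternating sum shows that the two surviving terms cancel while the remaining term factors through $\mr{F}(t_1=1)$ and thus vanishes on $\mr{N}_{t_1}\mr{F}(A)$.

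The only subtlety I anticipate, and the main point to be careful about, is the compatibility between the variable appearing in the definition of $\mr{N}_{t_1}$ and the variables produced by the cosimplicial structure: one must check that after applying $\mr{F}$, the specialization $\mr{F}(t_2=1)$ commutes appropriately with the face maps, i.e.\ that the relevant square of ring maps commutes on the nose before applying the functor $\mr{F}$. This is exactly the simplicial identity relating degeneracies and faces, so no genuine obstacle arises once the indexing is pinned down. In fact, the cleanest way to organize the whole verification is to observe that both $\mr{N}_{t_1,t_2}\mr{F}(A) = \mr{N}_{t_2}(\mr{N}_{t_1}\mr{F})(A)$ by \Cref{20170930-35}, so the claim reduces to showing that $\mb{d}^1_{\mr{F}\mb{P}_A}$ carries $\mr{N}_{t_1}\mr{F}(A)$ into the subgroup on which $\mr{F}(t_1=1)$ vanishes (landing in $\mr{N}_{t_1}\mr{F}(A[t_2])$) and further into the kernel of $\mr{F}(t_2=1)$; this reframing makes the two symmetric computations above into the two defining conditions and finishes the proof.
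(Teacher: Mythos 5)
Your proposal is correct and takes essentially the same route as the paper: the paper's proof likewise composes $\mb{d}^{1}_{\mr{F}\mb{P}_{A}}$ with the two maps cutting out $\mr{N}_{t_{1},t_{2}}\mr{F}(A)$ (there written as the degeneracies $\mb{P}_{A}(\sigma_{0}^{1})$ and $\mb{P}_{A}(\sigma_{1}^{1})$, which are your specializations $t_{1}=1$ and $t_{2}=1$ up to renaming the surviving variable) and observes that in each case two of the three terms cancel while the remaining one factors through $\mr{F}(t_{1}=1)$, hence kills $\mr{N}_{t_{1}}\mr{F}(A)$. Your variable bookkeeping matches the paper's computation exactly, so there is nothing to add.
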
 \begin{proof} For $0 \le i \le 2$, the composition $\mb{P}_{A}(\sigma_{0}^{1}) \mb{P}_{A}(\delta_{i}^{1})$ correspond to the $A$-algebra maps $A[t_{1}] \to A[t_{1}]$ sending $t_{1} \mapsto 1,t_{1},t_{1}$, respectively; thus $\mr{F}(\mb{P}_{A}(\sigma_{0}^{1}))(\mb{d}_{\mr{F}\mb{P}_{A}}^{1}(\mr{N}_{t_{1}}\mr{F}(A))) = 0$. By a similar argument, we have $\mr{F}(\mb{P}_{A}(\sigma_{1}^{1}))(\mb{d}_{\mr{F}\mb{P}_{A}}^{1}(\mr{N}_{t_{1}}\mr{F}(A))) = 0$. \end{proof}

\begin{lemma} \label{20170930-13} Assume \Cref{20170930-33}, \Cref{20170930-14}, and \Cref{20170930-34}. We have \[ \mathsf{h}^{1}(\mr{C}^{\bullet}(\mr{Pic}\mb{P}_{A})) = 0 \] for any ring $A$. \end{lemma}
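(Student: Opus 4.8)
The plan is to identify $\mathsf{h}^{1}$ with $\ker \mb{d}^{1}$ and prove that $\mb{d}^{1}$ is injective. The two cofaces $\mb{P}_{A}(\delta^{0}_{0}),\mb{P}_{A}(\delta^{0}_{1}) : A \to A[t_{1}]$ are both the structural inclusion, so they induce the same map on $\Pic$ and $\mb{d}^{0}=0$; hence $\mathsf{h}^{1}(\mr{C}^{\bullet}(\Pic\mb{P}_{A})) = \ker(\mb{d}^{1} : \Pic(A[t_{1}]) \to \Pic(A[t_{1},t_{2}]))$, where $\mb{d}^{1}=p_{1}^{\ast}-m^{\ast}+p_{2}^{\ast}$ for the $A$-algebra maps $p_{1},m,p_{2} : A[t_{1}] \to A[t_{1},t_{2}]$ sending $t_{1}$ to $t_{1},t_{1}t_{2},t_{2}$.

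I would then feed in the split decompositions of \Cref{20170930-35}: evaluation at $t_{1}=1$ gives $\Pic(A[t_{1}]) = \Pic(A) \oplus \mr{N}_{t_{1}}\Pic(A)$, and the four evaluations give $\Pic(A[t_{1},t_{2}]) = \Pic(A) \oplus \mr{N}_{t_{1}}\Pic(A) \oplus \mr{N}_{t_{2}}\Pic(A) \oplus \mr{N}_{t_{1},t_{2}}\Pic(A)$. A class pulled back from $A$ has the same image under each of $p_{1}^{\ast},m^{\ast},p_{2}^{\ast}$, so $\mb{d}^{1}$ carries the summand $\Pic(A)$ isomorphically onto the $\Pic(A)$-factor (the pullback along $A \to A[t_{1},t_{2}]$ is split injective). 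For $L \in \mr{N}_{t_{1}}\Pic(A)$, \Cref{20170930-23} places $\mb{d}^{1}(L)$ in $\mr{N}_{t_{1},t_{2}}\Pic(A)$, and computing the restrictions of $p_{1}^{\ast}L$, $m^{\ast}L$, $p_{2}^{\ast}L$ to $t_{1}=1$ and to $t_{2}=1$ shows their $\Pic(A)$-, $\mr{N}_{t_{1}}$- and $\mr{N}_{t_{2}}$-components cancel, leaving $\mb{d}^{1}(L)$ equal to minus the $\mr{N}_{t_{1},t_{2}}$-component of $m^{\ast}L$. Since the two summands map into distinct factors, $\ker\mb{d}^{1} = \{L \in \mr{N}_{t_{1}}\Pic(A) : m^{\ast}L = p_{1}^{\ast}L + p_{2}^{\ast}L\}$, and it remains to show every such \emph{additive} class vanishes.

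Because $\Pic(R) \cong \Pic(R_{\mr{red}})$ compatibly with $\mr{N}_{t_{1}}$ and with $\mb{d}^{1}$, I may assume $A$ is reduced; and as $\Pic$ commutes with filtered colimits, $L$ together with the relation $m^{\ast}L = p_{1}^{\ast}L + p_{2}^{\ast}L$ descends to a finitely generated $\Z$-subalgebra of $A$, so I may further assume $A$ is reduced, Noetherian and excellent, with finite normalization $\overline{A}$. I then induct on $\dim A$. If $A$ is normal (in particular if $\dim A = 0$) it is seminormal, so $\mr{N}_{t_{1}}\Pic(A)=0$ by Traverso's theorem and we are done. Otherwise the conductor $I$ of $A \to \overline{A}$ gives a Milnor square with bottom row $A/I \to \overline{A}/I$ and $\dim A/I < \dim A$. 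Applying $\mr{N}_{t_{1}}$ to the Units-Pic sequence of this square, and using that $\overline{A}$ is reduced and seminormal so that $\mr{N}_{t_{1}}\G_{m}(\overline{A}) = 0 = \mr{N}_{t_{1}}\Pic(\overline{A})$, leaves the exact segment
\[ \mr{N}_{t_{1}}\G_{m}(A/I) \xrightarrow{\ \phi\ } \mr{N}_{t_{1}}\G_{m}(\overline{A}/I) \xrightarrow{\ \partial\ } \mr{N}_{t_{1}}\Pic(A) \xrightarrow{\ \rho\ } \mr{N}_{t_{1}}\Pic(A/I). \]
As $\rho$ and $\partial$ are natural in the ring they commute with $p_{1}^{\ast},m^{\ast},p_{2}^{\ast}$, so $\rho(L)$ is again additive and hence vanishes by the inductive hypothesis applied to $A/I$; thus $L = \partial(\bar{u})$ for some $\bar{u} \in \mr{N}_{t_{1}}\G_{m}(\overline{A}/I)$.

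The heart of the matter — and the step I expect to be the main obstacle — is to deduce from the additivity of $L$ that $\bar{u} \in \im\phi$, which is equivalent to $L = \partial(\bar{u}) = 0$. The elementary input is that a grouplike unit is trivial: if $u(t) \in R[t]^{\times}$ satisfies $u(t_{1}t_{2}) = u(t_{1})u(t_{2})$, then comparing coefficients of $t_{1}^{i}t_{2}^{j}$ forces the coefficients of $u$ to be orthogonal idempotents, and since $u(0)$ is a unit this gives $u=1$ over an arbitrary ring $R$; this is the $\G_{m}$-analogue of the present lemma. Because $\partial$ is natural, the relation $\mb{d}^{1}L = 0$ says exactly that the multiplicative defect $m^{\ast}\bar{u}\cdot(p_{1}^{\ast}\bar{u})^{-1}(p_{2}^{\ast}\bar{u})^{-1}$ lies in the image of $\G_{m}((A/I)[t_{1},t_{2}])$; one must then run the grouplike computation relative to the subring $A/I \subseteq \overline{A}/I$, comparing coefficients modulo $A/I$, to conclude that $\bar{u}$ is grouplike modulo $\im\phi$ and therefore that $\bar{u} \in \im\phi$. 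Making this relative coefficient comparison precise, while keeping track of the connecting maps over both the one- and two-variable base rings, is where the real work lies.
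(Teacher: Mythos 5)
Your architecture coincides with the paper's own proof: identify $\mathsf{h}^{1}$ with $\ker \mb{d}^{1}$, use \Cref{20170930-23} and the splittings to reduce to classes in $\mr{N}_{t_{1}}\Pic(A)$, pass to a reduced finite type $\Z$-algebra, induct on dimension, and run the Units-Pic sequence of the conductor (Milnor) square. The only structural difference is that you use the normalization $\overline{A}$ where the paper uses the seminormalization (with a colimit over finite subextensions), and the paper's \Cref{20170930-36} explicitly notes that the normalization variant works equally well. Everything through the production of $\bar{u} \in \mr{N}_{t_{1}}\mr{U}(\overline{A}/I)$ with $L = \partial(\bar{u})$, and the observation that the multiplicative defect $p_{1}^{\ast}\bar{u} \cdot (m^{\ast}\bar{u})^{-1} \cdot p_{2}^{\ast}\bar{u}$ has all coefficients in $A/I$ (using $\mr{N}_{t_{1},t_{2}}\mr{U}(\overline{A}) = 0$ for $\overline{A}$ reduced and injectivity of $A/I \to \overline{A}/I$), is correct.

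However, there is a genuine gap precisely at the step you yourself flag as ``where the real work lies'': you never prove that $\bar{u} \in \im \phi$, and that implication is the entire content of the lemma --- which the paper singles out as its key lemma. Moreover, your proposed route (``run the grouplike computation relative to the subring $A/I \subseteq \overline{A}/I$, comparing coefficients modulo $A/I$'') is not merely unfinished but shaky: the relation satisfied by $\bar{u}$ is multiplicative, while $(\overline{A}/I)/(A/I)$ is only an $A/I$-module with no ring structure, so reducing a multiplicative identity coefficientwise modulo $A/I$ is not meaningful; expanding $\bar{u}(t_{1})\bar{u}(t_{2}) = D(t_{1},t_{2})\,\bar{u}(t_{1}t_{2})$ (with $D$ the defect) produces quadratic relations mixing the unknown coefficients of $\bar{u}$ with those of $D$, and it is unclear how to conclude. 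The paper closes this step with two ingredients absent from your writeup. First, the structure theorem for units of polynomial rings \cite[I, Lemma 3.12]{WEIBEL-KBOOK}: $\bar{u} = 1 + \beta(t_{1})$ with $\beta$ a polynomial with nilpotent coefficients. Second, the specialization $t_{2} = 0$ (not $t_{2} = 1$) applied to the defect: at $t_{2} = 0$ the factors $p_{2}^{\ast}\bar{u}$ and $(m^{\ast}\bar{u})^{-1}$ specialize to $1 + \beta(0)$ and $(1+\beta(0))^{-1}$ and cancel, so the specialized defect is exactly $1 + \beta(t_{1})$; since evaluation at $t_{2}=0$ carries $(A/I)[t_{1},t_{2}]$ into $(A/I)[t_{1}]$, this yields $\beta \in (A/I)[t_{1}]$, i.e. $\bar{u} \in \im \phi$, hence $L = \partial(\bar{u}) = 0$. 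With this trick, your ``grouplike units are trivial'' lemma, though true, is never needed.
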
 \begin{proof} Since $\mb{P}_{A}(\delta_{0}^{0}) = \mb{P}_{A}(\delta_{1}^{0})$, the differential $\mb{d}_{\mr{Pic}\mb{P}_{A}}^{0} : \Pic(A) \to \Pic(A[t_{1}])$ is the $0$ map. Hence it suffices to show that \[ \mb{d}_{\mr{Pic}\mb{P}_{A}}^{1} : \Pic(A[t_{1}]) \to \Pic(A[t_{1},t_{2}]) \] is injective. \par We have that $A$ is the filtered colimit of subrings of $A$ which are finite type $\Z$-algebras, hence by e.g. \cite[0B8W]{STACKS-PROJECT} we may reduce to the case when $A$ is a finite type $\Z$-algebra. In particular $A$ has finite Krull dimension. We proceed by induction on $\dim A$. Since the Picard group of a ring is invariant under nilpotent thickenings, we may assume that $A$ is reduced. If $\dim A = 0$, then $A$ is a finite product of fields, hence $\ker \mb{d}_{\mr{Pic}\mb{P}_{A}}^{1} = 0$ (since in fact $\Pic(A[t]) = 0$ in this case). \par Suppose $\dim A > 0$ and let \[ \alpha \in \mr{Pic}(A[t_{1}]) \] be a class such that $\mb{d}_{\mr{Pic}\mb{P}_{A}}^{1}(\alpha) = 0$. We have a direct sum decomposition $\Pic(A) \oplus \mr{N}_{t_{1}}\mr{Pic}(A) \simeq \Pic(A[t_{1}])$, and $\mb{P}_{A}(\delta_{0}^{1}),\mb{P}_{A}(\delta_{1}^{1}),\mb{P}_{A}(\delta_{2}^{1})$ are $A$-algebra maps, so in fact $\alpha \in \mr{N}_{t_{1}}\mr{Pic}(A)$. Let $Q(A)$ denote the total ring of fractions of $A$, and let \[ A^{\mr{sn}} \subset Q(A) \] denote the seminormalization \cite[Lemma 2.2]{SWAN-ONSEMINORMALITY} of $A$ in $Q(A)$. Write \[ \textstyle A^{\mr{sn}} = \varinjlim_{\lambda \in \Lambda} A_{\lambda} \] where each $A \subset A_{\lambda} \subset A^{\mr{sn}}$ is a finitely generated subextension of $A^{\mr{sn}}$; then $A \subset A_{\lambda}$ is a finite extension of rings since it is an integral extension. Thus \[ \textstyle \mr{N}_{t_{1}}\mr{Pic}(A^{\mr{sn}}) \simeq \varinjlim_{\lambda \in \Lambda} \mr{N}_{t_{1}}\mr{Pic}(A_{\lambda}) \] by e.g. \cite[0B8W]{STACKS-PROJECT}. By \cite[Corollary 3.4]{SWAN-ONSEMINORMALITY}, we have that $A^{\mr{sn}}$ is seminormal, thus $\mr{N}_{t_{1}}\mr{Pic}(A^{\mr{sn}}) = 0$ by Traverso's theorem \cite[Theorem 3.11]{WEIBEL-KBOOK}. Hence there exists some $\lambda \in \Lambda$ for which $\alpha$ lies in the kernel of $\mr{N}_{t_{1}}\mr{Pic}(A) \to \mr{N}_{t_{1}}\mr{Pic}(A_{\lambda})$.\footnote{Here, instead of using the limit argument, we may also use that the extension $A \subset A^{\rm{sn}}$ is finite since $A$ is a Nagata ring (it is a finite type $\Z$-algebra) and thus has finite normalization, hence has finite seminormalization.} Let \[ I := \{x \in A \;:\; xA_{\lambda} \subset A\} = \Ann_{A}(A_{\lambda}/A) \] be the conductor ideal of $A \subset A_{\lambda}$; it is the largest ideal of $A_{\lambda}$ contained in $A$ so in particular it is also an ideal of $A$. We denote \[ \mr{U}(A) := A^{\times} \] the group of units of $A$. By Milnor's theorem \cite[IX, (5.3)]{BASS-AKT}, the Milnor square \begin{equation} \label{20170930-13-eqn-04} \begin{tikzpicture}[>=angle 90, baseline=(current bounding box.center)] 
\matrix[matrix of math nodes,row sep=3em, column sep=2em, text height=1.5ex, text depth=0.25ex] { 
|[name=11]| A & |[name=12]| A_{\lambda} \\ 
|[name=21]| A/I & |[name=22]| A_{\lambda}/I \\
}; 
\draw[->,font=\scriptsize]
(11) edge[right hook->] (12) (21) edge[right hook->] (22) (11) edge[->>] (21) (12) edge[->>] (22); \end{tikzpicture} \end{equation} gives an exact sequence \begin{align} \label{20170930-13-eqn-03} \begin{aligned} 1 &\to \mr{U}(A) \stackrel{\Delta}{\to} \mr{U}(A/I) \oplus \mr{U}(A_{\lambda}) \stackrel{\pm}{\to} \mr{U}(A_{\lambda}/I) \\ &\stackrel{\partial}{\to} \Pic(A) \stackrel{\Delta}{\to} \Pic(A/I) \oplus \Pic(A_{\lambda}) \stackrel{\pm}{\to} \Pic(A_{\lambda}/I) \end{aligned} \end{align} of abelian groups, called the Units-Pic sequence \cite[I, Theorem 3.10]{WEIBEL-KBOOK}; here we denote by $\Delta$ the diagonal map and by $\pm$ the difference map. The boundary map $\partial$ of \labelcref{20170930-13-eqn-03} is functorial for morphisms between Milnor squares. Hence applying $\mr{N}_{t_{1}}$ and $\mr{N}_{t_{1},t_{2}}$ to \labelcref{20170930-13-eqn-03} gives a commutative diagram \begin{equation} \label{20170930-13-eqn-01} \begin{tikzpicture}[>=angle 90, baseline=(current bounding box.center)] 
\matrix[matrix of math nodes,row sep=3em, column sep=5em, text height=1.6ex, text depth=0.5ex] { 
|[name=11]| \mr{N}_{t_{1}}\mr{U}(A/I) \oplus \mr{N}_{t_{1}}\mr{U}(A_{\lambda}) & |[name=12]| \mr{N}_{t_{1},t_{2}}\mr{U}(A/I) \oplus \mr{N}_{t_{1},t_{2}}\mr{U}(A_{\lambda}) \\ 
|[name=21]| \mr{N}_{t_{1}}\mr{U}(A_{\lambda}/I) & |[name=22]| \mr{N}_{t_{1},t_{2}}\mr{U}(A_{\lambda}/I) \\
|[name=31]| \mr{N}_{t_{1}}\mr{Pic}(A) & |[name=32]| \mr{N}_{t_{1},t_{2}}\mr{Pic}(A) \\
|[name=41]| \mr{N}_{t_{1}}\mr{Pic}(A/I) \oplus \mr{N}_{t_{1}}\mr{Pic}(A_{\lambda}) & |[name=42]| \mr{N}_{t_{1},t_{2}}\mr{Pic}(A/I) \oplus \mr{N}_{t_{1},t_{2}}\mr{Pic}(A_{\lambda}) \\
}; 
\draw[->,font=\scriptsize] (11) edge node[above=-1pt] {$\mb{d}_{\mr{U}\mb{P}_{A/I}}^{1} \oplus \mb{d}_{\mr{U}\mb{P}_{A_{\lambda}}}^{1}$} (12) (21) edge node[above=-1pt] {$\mb{d}_{\mr{U}\mb{P}_{A_{\lambda}/I}}^{1}$} (22) (31) edge node[above=-1pt] {$\mb{d}_{\mr{Pic}\mb{P}_{A}}^{1}$} (32) (41) edge node[above=4pt] {$\mb{d}_{\mr{Pic}\mb{P}_{A/I}}^{1} \oplus \mb{d}_{\mr{Pic}\mb{P}_{A_{\lambda}}}^{1}$} (42) (11) edge node[left=-1pt] {$\pm_{t_{1}}$} (21) (21) edge node[left=-1pt] {$\partial_{t_{1}}$} (31) (31) edge node[left=-1pt] {$\Delta_{t_{1}}$} (41) (12) edge node[right=-1pt] {$\pm_{t_{1},t_{2}}$} (22) (22) edge node[right=-1pt] {$\partial_{t_{1},t_{2}}$} (32) (32) edge node[right=-1pt] {$\Delta_{t_{1},t_{2}}$} (42); \end{tikzpicture} \end{equation} where we denote by $\pm_{t_{1}},\partial_{t_{1}},\Delta_{t_{1}}$ (resp. $\pm_{t_{1},t_{2}},\partial_{t_{1},t_{2}},\Delta_{t_{1},t_{2}}$) the corresponding maps in the Units-Pic sequences associated to the Milnor squares obtained by tensoring \labelcref{20170930-13-eqn-04} with $- \otimes_{\Z} \Z[t_{1}]$ (resp. $- \otimes_{\Z} \Z[t_{1},t_{2}]$), and we denote by $\mb{d}_{\mr{F}\mb{P}_{A}}^{1}$ the restriction to $\mr{N}_{t_{1}}\mr{F}(A) \to \mr{N}_{t_{1},t_{2}}\mr{F}(A)$ (which makes sense by \Cref{20170930-23}). Each column of \labelcref{20170930-13-eqn-01} is exact. By \Cref{20160705-35} below, we have that $I$ contains a nonzerodivisor of $A$; hence $I$ is not contained in any minimal prime of $A$ by \cite[Lemma (14.10)]{ALTMANKLEIMAN-ATOCA}; hence $A/I$ has smaller Krull dimension than that of $A$ (c.f. \cite[p. 15]{WEIBEL-KBOOK}); the image of $\alpha$ under $\mr{N}_{t_{1}}\mr{Pic}(A) \to \mr{N}_{t_{1}}\mr{Pic}(A/I)$ is contained in $\ker \mb{d}_{\mr{Pic}\mb{P}_{A/I}}^{1}$, which is $0$ by the induction hypothesis since $\dim A/I < \dim A$. Hence $\Delta_{t_{1}}(\alpha) = 0$, so by exactness of the left column of \labelcref{20170930-13-eqn-01}, there exists \[ \xi \in \mr{N}_{t_{1}}\mr{U}(A_{\lambda}/I) \] such that $\alpha = \partial_{t_{1}}(\xi)$. By e.g. \cite[I, Lemma 3.12]{WEIBEL-KBOOK} we have that $\xi$ is of the form \[ \xi = 1 + \beta(t_{1}) \] where \[ \beta \in t_{1}(\mr{nil}(A_{\lambda}/I)[t_{1}]) \] is a polynomial with nilpotent coefficients and whose constant coefficient is zero. We have \begin{align} \label{20170930-13-eqn-02} \mb{d}_{\mr{U}\mb{P}_{A_{\lambda}/I}}^{1}(\xi) = (1+\beta(t_{1}))(1-\beta(t_{1}t_{2}))(1+\beta(t_{2})) \end{align} in $\mr{N}_{t_{1},t_{2}}\mr{U}(A_{\lambda}/I)$. Since $\partial_{t_{1},t_{2}}(\mb{d}_{\mr{U}\mb{P}_{A_{\lambda}/I}}^{1}(\xi)) = \mb{d}_{\mr{Pic}\mb{P}_{A}}^{1}(\partial_{t_{1}}(\xi)) = \mb{d}_{\mr{Pic}\mb{P}_{A}}^{1}(\alpha) = 0$, by the exactness of the right column of \labelcref{20170930-13-eqn-01} there exists \[ \gamma \in \mr{N}_{t_{1},t_{2}}\mr{U}(A/I) \oplus \mr{N}_{t_{1},t_{2}}\mr{U}(A_{\lambda}) \] such that $\mb{d}_{\mr{U}\mb{P}_{A_{\lambda}/I}}^{1}(\xi) = \pm_{t_{1},t_{2}}(\gamma)$. Here by \cite[I, Lemma 3.12]{WEIBEL-KBOOK} the inclusion $\mr{U}(A_{\lambda}) \subset \mr{U}(A_{\lambda}[t_{1},t_{2}])$ is an equality since $A_{\lambda}$ is reduced, hence $\mr{N}_{t_{1},t_{2}}\mr{U}(A_{\lambda}) = 0$. Moreover $A/I \to A_{\lambda}/I$ is injective (since $I$ is the largest ideal of $A_{\lambda}$ contained in $A$), hence \labelcref{20170930-13-eqn-02} is in fact contained in $\mr{N}_{t_{1},t_{2}}\mr{U}(A/I)$. Thus in fact \[ \beta \in (A/I)[t_{1}] \] as can be seen for example by setting $t_{2} = 0$ in \labelcref{20170930-13-eqn-02}. In other words, we have that $\xi$ is in the image of $\pm_{t_{1}}$; since $\partial_{t_{1}} \circ \pm_{t_{1}} = 0$, we conclude $\alpha = 0$. \end{proof}

In the following lemma, we write out the details of a claim in \cite[p. 15]{WEIBEL-KBOOK}.

\begin{lemma} \label{20160705-35} Let $A$ be a ring with total ring of fractions $Q(A)$, and let $A \subset B \subset Q(A)$ be a subring. \begin{enumerate} \item[(i)] The inclusion $A \subset B$ preserves nonzerodivisors, and any nonzerodivisor of $B$ is of the form $r/u$ where $r,u \in A$ are nonzerodivisors of $A$. \item[(ii)] The total ring of fractions of $B$ is $Q(A)$. \item[(iii)] If $A \subset B$ is a finite extension, the conductor ideal $I = \{x \in A \;:\; xB \subset A\} = \Ann_{A}(B/A)$ contains a nonzerodivisor of $A$. \end{enumerate} \end{lemma}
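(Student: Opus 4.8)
The plan is to work throughout with the multiplicative set $S \subset A$ of nonzerodivisors of $A$, so that $Q(A) = S^{-1}A$ by definition. First I would record two elementary facts about $Q(A)$. The key fact is that the nonzerodivisors of $Q(A)$ are exactly its units: a unit is always a nonzerodivisor, and conversely if $x = a/s \in Q(A)$ is a nonzerodivisor then, since $s \in S$ is already invertible in $Q(A)$, the element $a \in A$ is a nonzerodivisor of $Q(A)$, hence of $A$ (as $A \hookrightarrow Q(A)$), so $a \in S$ and $a$, and therefore $x$, is a unit. The second fact is that $Q(A) = S^{-1}B$: every element of $S$ is already invertible in $Q(A) \supseteq B$, so $S^{-1}B \subseteq Q(A)$, while $Q(A) = S^{-1}A \subseteq S^{-1}B$ because $A \subseteq B$.

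For part (i), the first assertion is immediate from the key fact: a nonzerodivisor $a$ of $A$ lies in $S$, hence is a unit of $Q(A)$, and a unit of $Q(A)$ cannot annihilate a nonzero element of $B \subseteq Q(A)$, so $a$ is a nonzerodivisor of $B$. For the second assertion, given a nonzerodivisor $x$ of $B$, the main step is to promote it to a nonzerodivisor of $Q(A) = S^{-1}B$: if $x \cdot (b/s) = 0$ in $S^{-1}B$ with $b \in B$ and $s \in S$, then $txb = 0$ in $B$ for some $t \in S$, and since $t$ and then $x$ are nonzerodivisors of $B$ (using the first assertion for $t$) we get $b = 0$. Thus $x$ is a nonzerodivisor, hence a unit, of $Q(A)$; writing $x = r/u$ with $r \in A$ and $u \in S$, the element $r = xu$ is a unit of $Q(A)$ contained in $A$, so it is a nonzerodivisor of $A$, giving the desired expression.

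Part (ii) then follows formally from the universal property of localization. Writing $T$ for the set of nonzerodivisors of $B$, so that $Q(B) = T^{-1}B$, I have $S \subseteq T$ by part (i), and again by part (i) every element of $T$ becomes a unit in $Q(A) = S^{-1}B$. Hence the localization maps $B \to S^{-1}B$ and $B \to T^{-1}B$ each invert the other's denominators, so the induced $B$-algebra maps between $Q(A)$ and $Q(B)$ are mutually inverse, identifying $Q(B)$ with $Q(A)$.

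For part (iii), I would choose generators $b_{1},\dotsc,b_{n}$ of $B$ as an $A$-module and write each $b_{i} = r_{i}/u_{i}$ with $r_{i} \in A$ and $u_{i} \in S$. Setting $u := u_{1} \cdots u_{n} \in S$, which is again a nonzerodivisor of $A$, each product $u b_{i}$ lies in $A$, so $uB \subseteq A$ and therefore $u \in I$; thus the conductor contains the nonzerodivisor $u$. I expect the main obstacle to be part (i), specifically the passage from ``nonzerodivisor of $B$'' to ``unit of $Q(A)$'', which requires both the identification $Q(A) = S^{-1}B$ and the clearing-of-denominators cancellation; parts (ii) and (iii) are then formal consequences.
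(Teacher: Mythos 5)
Your proposal is correct and takes essentially the same elementary route as the paper: (i) by cancellation of denominators, (ii) by the universal property of localization, and (iii) by clearing denominators of a finite generating set, where your argument coincides with the paper's almost word for word. The only organizational difference is that you factor (i) through the auxiliary facts that $Q(A) = S^{-1}B$ and that nonzerodivisors of $Q(A)$ are units, whereas the paper runs the cancellation directly inside $B$ (showing $r$ is a nonzerodivisor of $A$ by multiplying the relation $rx=0$ by $u$ and cancelling); both versions are sound.
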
 \begin{proof} (i) If $x \in A$ is a nonzerodivisor of $A$, then its image in $Q(A)$ is a nonzerodivisor of $Q(A)$, hence its image in $B$ is a nonzerodivisor of $B$. An arbitrary element of $B$ is of the form $r/u$ where $r,u \in A$ and $u$ is a nonzerodivisor of $A$. If $x \in A$ is an element such that $rx = 0$ in $A$, then $u(r/u)x = 0$ in $B$ and $u$ is a nonzerodivisor of $B$ (by the first part) so $(r/u)x = 0$ in $B$; then $x = 0$ since $r/u$ is by assumption a nonzerodivisor of $B$; hence $r$ is a nonzerodivisor of $A$. \par (ii) Let $\varphi : B \to S$ be a ring homomorphism such that $\varphi$ sends nonzerodivisors of $B$ to units of $S$. By the first part, $\varphi$ sends nonzerodivisors of $A$ to units of $S$, hence there exists a ring map $\xi : Q(A) \to S$ such that $\xi|_{A} = \varphi|_{A}$. By definition of $\xi$, for any $a/u \in B$ we have $\xi(a/u) = \varphi(a) \cdot (\varphi(u))^{-1} = \varphi(a/u)$; hence $\xi|_{B} = \varphi$. \par (iii) Let $x_{1}/u_{1} , \dotsc , x_{n}/u_{n}$ be elements of $B$ which generates $B$ as an $A$-module; then $u_{1} \dotsb u_{n}$ is a nonzerodivisor of $A$ which is contained in $I$. \end{proof}

\begin{remark} \label{20170930-36} In the proof of \Cref{20170930-13}, we may use the normalization instead of the seminormalization. If $A$ is a reduced finite type $\Z$-algebra, then its normalization $\overline{A} \subset Q(A)$ is a finite extension of $A$; thus $\overline{A}$ is a Noetherian reduced ring which is integrally closed in its total ring of fractions (by e.g. \Cref{20160705-35} (ii)), hence it is finite product of Noetherian normal integral domains \cite[030C]{STACKS-PROJECT}. It is easily checked from the definition of a seminormal ring that normal domains are seminormal. \end{remark}

\begin{remark} \label{20170930-26} For any ring $R$, by \cite[Lemma 1.5.1]{WEIBEL-PIACF} and \cite[Theorem 5.5]{WEIBEL-PIACF} we have an exact sequence \begin{align} \label{20170930-04-eqn-01} \begin{aligned} 0 &\to \Pic(R) \stackrel{f}{\to} \Pic(R[t]) \oplus \Pic(R[t^{-1}]) \stackrel{\Sigma}{\to} \Pic(R[t^{\pm}]) \\ &\to \H^{1}_{\et}(\Spec R,\Z) \to 0 \end{aligned} \end{align} of abelian groups, where $f$ denotes the map sending $\alpha \mapsto (\alpha,-\alpha)$ and $\Sigma$ denotes the addition map. For any ring $R$, by \cite[Theorem 2.4]{WEIBEL-PIACF} and \cite[Theorem 5.5]{WEIBEL-PIACF} we have isomorphisms \begin{align} \label{20170930-04-eqn-03} \H^{1}_{\et}(\Spec R,\Z) \simeq \H^{1}_{\et}(\Spec R[t],\Z) \simeq \H^{1}_{\et}(\Spec R[t^{\pm}],\Z) \end{align} of abelian groups. \end{remark}

The following is stated in \cite{WEIBEL-PIACF}; we write out the details here.

\begin{lemma} \label{20170930-15} Let $A$ be a strictly henselian local ring. Then the canonical map \begin{align*} \bigoplus_{(\ell,\Diamond) \in \{1,2\} \times \{+,-\}} \Pic(A[t_{\ell}^{\Diamond}]) \oplus \bigoplus_{(\Diamond_{1},\Diamond_{2}) \in \{+,-\}^{2}} \mr{N}_{t_{1}^{\Diamond_{1}},t_{2}^{\Diamond_{2}}}\mr{Pic}(A) \to \Pic(A[t_{1}^{\pm},t_{2}^{\pm}]) \end{align*} induced by the inclusions $A[t_{\ell}^{\Diamond}] \to A[t_{1}^{\pm},t_{2}^{\pm}]$ and $A[t_{1}^{\Diamond_{1}},t_{2}^{\Diamond_{2}}] \to A[t_{1}^{\pm},t_{2}^{\pm}]$ is an isomorphism. \end{lemma}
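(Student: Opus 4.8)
The plan is to iterate the one-variable fundamental exact sequence of \Cref{20170930-26}, exploiting the two features of a strictly henselian local ring $A$ that make that sequence collapse: $\Pic(A) = 0$ (as $A$ is local) and $\H^{1}_{\et}(\Spec A, \Z) = 0$ (as $\Spec A$ is connected with trivial \'etale fundamental group and $\Z$ is torsion-free). The latter is the crucial input, and I would first record that it is inherited by all the rings appearing in the iteration: by the invariance isomorphisms \labelcref{20170930-04-eqn-03} (with $R = A$) one has $\H^{1}_{\et}(\Spec A[t_{1}^{\pm}], \Z) \simeq \H^{1}_{\et}(\Spec A[t_{2}], \Z) \simeq \H^{1}_{\et}(\Spec A, \Z) = 0$.

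First I would apply \Cref{20170930-26} with $R = A[t_{1}^{\pm}]$ in the variable $t_{2}$. Since $\H^{1}_{\et}(\Spec A[t_{1}^{\pm}], \Z) = 0$, the four-term sequence \labelcref{20170930-04-eqn-01} becomes short exact, and because the maps $\Pic(R) \to \Pic(R[t_{2}])$ and $\Pic(R) \to \Pic(R[t_{2}^{-1}])$ are split by evaluation at $t_{2} = 1$, it yields a natural decomposition
\[ \Pic(A[t_{1}^{\pm}, t_{2}^{\pm}]) \simeq \Pic(A[t_{1}^{\pm}]) \oplus \mr{N}_{t_{2}}\Pic(A[t_{1}^{\pm}]) \oplus \mr{N}_{t_{2}^{-1}}\Pic(A[t_{1}^{\pm}]) \]
in which the $\mr{N}_{t_{2}}$- and $\mr{N}_{t_{2}^{-1}}$-summands are, by construction, the images of $\Pic(A[t_{1}^{\pm}, t_{2}])$ and $\Pic(A[t_{1}^{\pm}, t_{2}^{-1}])$ under the inclusion pullbacks. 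The first summand I would dispose of by one more application of \Cref{20170930-26}, now with $R = A$ in the variable $t_{1}$: using $\Pic(A) = 0$ and $\H^{1}_{\et}(\Spec A, \Z) = 0$ this gives $\Pic(A[t_{1}^{\pm}]) \simeq \mr{N}_{t_{1}}\Pic(A) \oplus \mr{N}_{t_{1}^{-1}}\Pic(A) = \Pic(A[t_{1}]) \oplus \Pic(A[t_{1}^{-1}])$, accounting for two of the four terms of the first sum.

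For the remaining two summands I would first decompose $\Pic(A[t_{1}^{\pm}, t_{2}]) = \Pic(A[t_{2}][t_{1}^{\pm}])$ by applying \Cref{20170930-26} with $R = A[t_{2}]$ in the variable $t_{1}$ (again $\H^{1}_{\et}(\Spec A[t_{2}], \Z) = 0$), obtaining a decomposition $\Pic(A[t_{2}]) \oplus \mr{N}_{t_{1}}\Pic(A[t_{2}]) \oplus \mr{N}_{t_{1}^{-1}}\Pic(A[t_{2}])$ that is natural in the base ring $A[t_{2}]$. The evaluation $t_{2} = 1$ therefore respects this decomposition, so the kernel $\mr{N}_{t_{2}}\Pic(A[t_{1}^{\pm}])$ splits as the direct sum of the kernels on the three summands. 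Invoking \Cref{20170930-35} to identify $\mr{N}_{t_{2}}\mr{N}_{t_{1}^{\Diamond}}\Pic(A) = \mr{N}_{t_{1}^{\Diamond}, t_{2}}\Pic(A)$, and using $\mr{N}_{t_{2}}\Pic(A) = \Pic(A[t_{2}])$ (as $\Pic(A) = 0$), this gives
\[ \mr{N}_{t_{2}}\Pic(A[t_{1}^{\pm}]) \simeq \Pic(A[t_{2}]) \oplus \mr{N}_{t_{1}, t_{2}}\Pic(A) \oplus \mr{N}_{t_{1}^{-1}, t_{2}}\Pic(A), \]
and the symmetric computation gives $\mr{N}_{t_{2}^{-1}}\Pic(A[t_{1}^{\pm}]) \simeq \Pic(A[t_{2}^{-1}]) \oplus \mr{N}_{t_{1}, t_{2}^{-1}}\Pic(A) \oplus \mr{N}_{t_{1}^{-1}, t_{2}^{-1}}\Pic(A)$.

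Assembling the three decompositions produces exactly the eight summands $\Pic(A[t_{\ell}^{\Diamond}])$ and $\mr{N}_{t_{1}^{\Diamond_{1}}, t_{2}^{\Diamond_{2}}}\Pic(A)$ of the statement, with no overcounting precisely because $\Pic(A) = 0$. The step I expect to require the most care — and the main obstacle — is verifying that the composite of these abstract isomorphisms is the canonical map of the statement, i.e. that each summand is carried into $\Pic(A[t_{1}^{\pm}, t_{2}^{\pm}])$ by pullback along the asserted inclusion. This should follow because in each application of \Cref{20170930-26} the $\mr{N}$-summand of $\Pic(R[t^{\pm}])$ is realized as the image of $\Pic(R[t])$ (resp. $\Pic(R[t^{-1}])$) under $R[t] \hookrightarrow R[t^{\pm}]$, and these inclusions compose through the two iterations in the evident way; the bookkeeping is to check that the functoriality of the $\mr{N}$-operations matches these composites with the inclusions $A[t_{\ell}^{\Diamond}] \hookrightarrow A[t_{1}^{\pm}, t_{2}^{\pm}]$ and $A[t_{1}^{\Diamond_{1}}, t_{2}^{\Diamond_{2}}] \hookrightarrow A[t_{1}^{\pm}, t_{2}^{\pm}]$.
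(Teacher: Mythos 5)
Your proposal is correct and follows essentially the same route as the paper's proof: iterate Weibel's fundamental sequence \labelcref{20170930-04-eqn-01} (first in $t_{2}$ over $A[t_{1}^{\pm}]$, then in $t_{1}$ over $A[t_{2}^{\Diamond}]$), using $\Pic(A)=0$, the vanishing and homotopy invariance of $\H^{1}_{\et}(-,\Z)$ from \labelcref{20170930-04-eqn-03}, the splitting by evaluation at $1$, and \Cref{20170930-35} to identify the iterated kernels with $\mr{N}_{t_{1}^{\Diamond_{1}},t_{2}^{\Diamond_{2}}}\mr{Pic}(A)$. The paper's isomorphisms \labelcref{20170930-15-eqn-04}, \labelcref{20170930-15-eqn-08}, and \labelcref{20170930-15-eqn-09} correspond exactly to your three decomposition steps, including the restriction-to-kernels argument for the mixed terms.
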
 \begin{proof} For notational convenience, we denote $t^{+} = t$ and $t^{-} = t^{-1}$, etc. Since $A$ is strictly henselian local, by \labelcref{20170930-04-eqn-03} the exact sequence \labelcref{20170930-04-eqn-01} reduces to an isomorphism \begin{align} \label{20170930-15-eqn-04} \Pic(A[t^{+}]) \oplus \Pic(A[t^{-}]) \stackrel{\sim}{\to} \Pic(A[t^{\pm}]) \end{align} and split exact sequences \begin{align} \label{20170930-15-eqn-05} 0 \to \Pic(A[t_{2}^{\Diamond}]) \to \Pic(A[t_{1}^{+},t_{2}^{\Diamond}]) \oplus \Pic(A[t_{1}^{-},t_{2}^{\Diamond}]) \to \Pic(A[t_{1}^{\pm},t_{2}^{\Diamond}]) \to 0 \end{align} and \begin{align} \label{20170930-15-eqn-06} 0 \to \Pic(A[t_{1}^{\pm}]) \to \Pic(A[t_{1}^{\pm},t_{2}^{+}]) \oplus \Pic(A[t_{1}^{\pm},t_{2}^{-}]) \to \Pic(A[t_{1}^{\pm},t_{2}^{\pm}]) \to 0 \end{align} by taking $R := A,A[t_{2}^{\Diamond}],A[t_{1}^{\pm}]$ respectively for $\Diamond \in \{+,-\}$. The sequence \labelcref{20170930-15-eqn-05} induces a natural isomorphism \begin{align} \label{20170930-15-eqn-07} \Pic(A[t_{2}^{\Diamond}]) \oplus \mr{N}_{t_{1}^{+}}\mr{Pic}(A[t_{2}^{\Diamond}]) \oplus \mr{N}_{t_{1}^{-}}\mr{Pic}(A[t_{2}^{\Diamond}]) \stackrel{\sim}{\to} \Pic(A[t_{1}^{\pm},t_{2}^{\Diamond}]) \end{align} of abelian groups. The isomorphism \labelcref{20170930-15-eqn-07} restricts to an isomorphism \begin{align} \label{20170930-15-eqn-09} \Pic(A[t_{2}^{\Diamond}]) \oplus \mr{N}_{t_{1}^{+},t_{2}^{\Diamond}}\mr{Pic}(A) \oplus \mr{N}_{t_{1}^{-},t_{2}^{\Diamond}}\mr{Pic}(A) \stackrel{\sim}{\to} \mr{N}_{t_{2}^{\Diamond}}\Pic(A[t_{1}^{\pm}]) \end{align} by taking the subgroups of elements annihilated by setting $t_{2}^{\Diamond} = 1$. The sequence \labelcref{20170930-15-eqn-06} induces a natural isomorphism \begin{align} \label{20170930-15-eqn-08} \Pic(A[t_{1}^{\pm}]) \oplus \mr{N}_{t_{2}^{+}}\mr{Pic}(A[t_{1}^{\pm}]) \oplus \mr{N}_{t_{2}^{-}}\mr{Pic}(A[t_{1}^{\pm}]) \stackrel{\sim}{\to} \Pic(A[t_{1}^{\pm},t_{2}^{\pm}]) \end{align} of abelian groups. We combine \labelcref{20170930-15-eqn-08} and \labelcref{20170930-15-eqn-04} and \labelcref{20170930-15-eqn-09} (for $\Diamond \in \{+,-\}$) and \Cref{20170930-35} to obtain the desired result. \end{proof}

\begin{lemma} \label{20170930-04} We have \[ \ker(\mb{d}_{\mr{Pic}\mb{L}_{A}}^{1}) = 0 \] for any strictly henselian local ring $A$. \end{lemma}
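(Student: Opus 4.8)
The plan is to reduce the Laurent statement to the polynomial statement \Cref{20170930-13} by projecting onto two well-chosen summands of the decomposition in \Cref{20170930-15}. Writing $\mb{d}_{\Pic\mb{L}_{A}}^{1} = p_{1}^{\ast} - m^{\ast} + p_{2}^{\ast} : \Pic(A[t^{\pm}]) \to \Pic(A[t_{1}^{\pm},t_{2}^{\pm}])$ with $p_{1}^{\ast},m^{\ast},p_{2}^{\ast}$ induced by $t \mapsto t_{1},\, t_{1}t_{2},\, t_{2}$, I would first record the two relevant decompositions. Since $A$ is strictly henselian local we have $\Pic(A) = 0$ and $\H^{1}_{\et}(\Spec A,\Z) = 0$, so \labelcref{20170930-04-eqn-01} reduces to the isomorphism \labelcref{20170930-15-eqn-04}, giving $\Pic(A[t^{\pm}]) = \mr{N}_{t^{+}}\Pic(A) \oplus \mr{N}_{t^{-}}\Pic(A)$; write a kernel element as $\alpha = \alpha^{+} + \alpha^{-}$ accordingly. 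On the target I would use \Cref{20170930-15} but retain only the two diagonal mixed summands $\mr{N}_{t_{1}^{+},t_{2}^{+}}\Pic(A)$ and $\mr{N}_{t_{1}^{-},t_{2}^{-}}\Pic(A)$.

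The key bookkeeping step is to locate the images of the three structure maps. The maps $p_{1}^{\ast}$ and $p_{2}^{\ast}$ have image in $A[t_{1}^{\pm}]$ and $A[t_{2}^{\pm}]$, so they land in the one-variable summands of \Cref{20170930-15} and contribute nothing to any mixed summand. On $\mr{N}_{t^{+}}\Pic(A) = \Pic(A[t])$ the map $m^{\ast}$ factors as $\Pic(A[t]) \to \Pic(A[t_{1},t_{2}]) \to \Pic(A[t_{1}^{\pm},t_{2}^{\pm}])$, the first map induced by $t \mapsto t_{1}t_{2}$ and the second by inclusion; since $\Pic(A[t_{1},t_{2}]) = \mr{N}_{t_{1}}\Pic(A) \oplus \mr{N}_{t_{2}}\Pic(A) \oplus \mr{N}_{t_{1},t_{2}}\Pic(A)$ maps summand-by-summand into $\mr{N}_{t_{1}^{+}}\Pic(A) \oplus \mr{N}_{t_{2}^{+}}\Pic(A) \oplus \mr{N}_{t_{1}^{+},t_{2}^{+}}\Pic(A)$, the element $m^{\ast}(\alpha^{+})$ meets only these three summands; symmetrically, using $t^{-1} \mapsto t_{1}^{-1}t_{2}^{-1}$, the element $m^{\ast}(\alpha^{-})$ meets only $\mr{N}_{t_{1}^{-}}\Pic(A) \oplus \mr{N}_{t_{2}^{-}}\Pic(A) \oplus \mr{N}_{t_{1}^{-},t_{2}^{-}}\Pic(A)$. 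Projecting $\mb{d}_{\Pic\mb{L}_{A}}^{1}(\alpha)$ onto $\mr{N}_{t_{1}^{+},t_{2}^{+}}\Pic(A)$ therefore sees only $-m^{\ast}(\alpha^{+})$, and under the identification $\mr{N}_{t_{1}^{+},t_{2}^{+}}\Pic(A) = \mr{N}_{t_{1},t_{2}}\Pic(A)$ coming from $A[t_{1},t_{2}] \hookrightarrow A[t_{1}^{\pm},t_{2}^{\pm}]$ this equals minus the $\mr{N}_{t_{1},t_{2}}$-component of $\mb{P}_{A}(\delta_{1}^{1})(\alpha^{+})$. Since $\mb{P}_{A}(\delta_{0}^{1})(\alpha^{+})$ and $\mb{P}_{A}(\delta_{2}^{1})(\alpha^{+})$ have no $\mr{N}_{t_{1},t_{2}}$-component and $\mb{d}_{\Pic\mb{P}_{A}}^{1}(\alpha^{+})$ lies in $\mr{N}_{t_{1},t_{2}}\Pic(A)$ by \Cref{20170930-23}, the projection is exactly $\mb{d}_{\Pic\mb{P}_{A}}^{1}(\alpha^{+})$ (the overall sign being immaterial for what follows); symmetrically the projection onto $\mr{N}_{t_{1}^{-},t_{2}^{-}}\Pic(A)$ is $\mb{d}_{\Pic\mb{P}_{A}}^{1}(\alpha^{-})$ computed in the variables $t_{i}^{-1}$.

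If $\alpha$ lies in the kernel, both projections vanish, so $\mb{d}_{\Pic\mb{P}_{A}}^{1}(\alpha^{+}) = \mb{d}_{\Pic\mb{P}_{A}}^{1}(\alpha^{-}) = 0$, whence $\alpha^{+} = \alpha^{-} = 0$ by the injectivity established in \Cref{20170930-13}, and thus $\alpha = 0$. The essential input is therefore the polynomial lemma \Cref{20170930-13}; I expect the main obstacle here to be organizational rather than deep, namely verifying that the direct-sum decompositions of \labelcref{20170930-15-eqn-04} and \Cref{20170930-15} are compatible with the inclusion $A[t_{1},t_{2}] \hookrightarrow A[t_{1}^{\pm},t_{2}^{\pm}]$, so that the diagonal-quadrant projection of $m^{\ast}$ reproduces $\mb{d}_{\Pic\mb{P}_{A}}^{1}$ while the off-diagonal and one-variable summands carry no information relevant to injectivity.
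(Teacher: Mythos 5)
Your proposal is correct and takes essentially the same approach as the paper: the paper packages your quadrant bookkeeping as a commutative square in which $f_{1} : (\Pic(A[t_{1}]))^{\oplus 2} \to \Pic(A[t_{1}^{\pm}])$ (induced by $t_{1} \mapsto t_{1}, t_{1}^{-1}$) is an isomorphism by \labelcref{20170930-04-eqn-01}, $f_{2} : (\mr{N}_{t_{1},t_{2}}\mr{Pic}(A))^{\oplus 2} \to \Pic(A[t_{1}^{\pm},t_{2}^{\pm}])$ is injective by \Cref{20170930-15}, and the vertical maps are $(\mb{d}_{\mr{Pic}\mb{P}_{A}}^{1})^{\oplus 2}$ and $\mb{d}_{\mr{Pic}\mb{L}_{A}}^{1}$. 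Your projection onto the two diagonal mixed summands is exactly the content of the commutativity of that square (via \Cref{20170930-23}) plus the injectivity of $f_{2}$, and both arguments reduce to the same key input, \Cref{20170930-13}.
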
 \begin{proof} The inclusion $\mr{N}_{t_{1}}\mr{Pic}(A) \subseteq \mr{Pic}(A[t_{1}])$ is an equality since $A$ is a local ring; recall that $\mb{d}_{\mr{Pic}\mb{P}_{A}}^{1}(\mr{N}_{t_{1}}\mr{Pic}(A)) \subseteq \mr{N}_{t_{1},t_{2}}\mr{Pic}(A)$ by \Cref{20170930-23}. We have a commutative diagram \begin{center}\begin{tikzpicture}[>=angle 90] 
\matrix[matrix of math nodes,row sep=4em, column sep=3em, text height=1.5ex, text depth=0.25ex] { 
|[name=11]| (\Pic(A[t_{1}]))^{\oplus 2} & |[name=12]| \Pic(A[t_{1}^{\pm}]) \\ 
|[name=21]| (\mr{N}_{t_{1},t_{2}}\mr{Pic}(A))^{\oplus 2} & |[name=22]| \Pic(A[t_{1}^{\pm},t_{2}^{\pm}]) \\
}; 
\draw[->,font=\scriptsize]
(11) edge node[above=-1pt] {$\simeq$} node[below=-1pt] {$f_{1}$} (12) (21) edge[right hook->] node[below=-1pt] {$f_{2}$} (22) (11) edge node[left=-1pt] {$(\mb{d}_{\mr{Pic}\mb{P}_{A}}^{1})^{\oplus 2}$} (21) (12) edge node[right=-1pt] {$\mb{d}_{\mr{Pic}\mb{L}_{A}}^{1}$} (22); \end{tikzpicture} \end{center} where $f_{1}$ and $f_{2}$ are the addition maps induced on the Picard groups by the $A$-algebra maps $A[t_{1}] \to A[t_{1}^{\pm}]$ sending $t_{1}$ to $t_{1},t_{1}^{-1}$ and $A[t_{1},t_{2}] \to A[t_{1}^{\pm},t_{2}^{\pm}]$ sending $(t_{1},t_{2}) \mapsto (t_{1},t_{2}),(t_{1}^{-1},t_{2}^{-1})$ respectively. Here $f_{1}$ is an isomorphism by \labelcref{20170930-04-eqn-01} since $A$ is strictly henselian local, and $f_{2}$ is injective by \Cref{20170930-15}. Since $\mb{d}_{\mr{Pic}\mb{P}_{A}}^{1}$ is injective by \Cref{20170930-13}, we have that $\mb{d}_{\mr{Pic}\mb{L}_{A}}^{1}$ is injective. \end{proof}

\section{Unit groups of Laurent polynomial rings} \label{sec-04}

The purpose of this section is to prove \Cref{20170930-16}. As in \Cref{sec-03}, when it is convenient we will denote $\mr{U}(A) := A^{\times}$ the group of units of a ring $A$.

\begin{lemma} \label{20170930-20} Let $\{A_{\lambda}\}_{\lambda \in \Lambda}$ be a filtered inductive system of rings, and let \[ \textstyle A := \varinjlim_{\lambda \in \Lambda} A_{\lambda} \] be the colimit ring. In the notation of \Cref{20170930-33} and \Cref{20170930-14}, the induced morphism of complexes \[ \textstyle \varinjlim_{\lambda \in \Lambda} \mr{C}^{\bullet}(\mr{U}\mb{L}_{A_{\lambda}}) \to \mr{C}^{\bullet}(\mr{U}\mb{L}_{A}) \] is an isomorphism. \end{lemma}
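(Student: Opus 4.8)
The goal is to show that the natural map
\[ \textstyle \varinjlim_{\lambda} \mr{C}^{\bullet}(\mr{U}\mb{L}_{A_{\lambda}}) \to \mr{C}^{\bullet}(\mr{U}\mb{L}_{A}) \]
is an isomorphism of cochain complexes. Since both sides are cochain complexes concentrated in nonnegative degrees and the morphism is induced levelwise, it suffices to check that it is an isomorphism in each cosimplicial degree $p \ge 0$; the compatibility with the differentials is automatic because each $\mb{d}^{n}$ is an alternating sum of the maps $\mr{U}\mb{L}(\delta_{i}^{n})$, which are themselves induced from the functorial maps and hence commute with filtered colimits. Thus the plan is to reduce to the single claim that for each fixed $p$ the canonical map
\[ \textstyle \varinjlim_{\lambda} \mr{U}(\mb{L}_{A_{\lambda}}([p])) \to \mr{U}(\mb{L}_{A}([p])) \]
is an isomorphism of abelian groups.

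Unwinding the definitions, $\mb{L}_{A_{\lambda}}([p]) = A_{\lambda}[t_{1}^{\pm},\dotsc,t_{p}^{\pm}]$ and $\mb{L}_{A}([p]) = A[t_{1}^{\pm},\dotsc,t_{p}^{\pm}]$, so the statement I actually need is that forming the unit group of a Laurent polynomial ring in a fixed number of variables commutes with filtered colimits of the coefficient ring. First I would observe that the formation of the Laurent polynomial ring itself commutes with filtered colimits, i.e. $\varinjlim_{\lambda} A_{\lambda}[t_{1}^{\pm},\dotsc,t_{p}^{\pm}] \simeq A[t_{1}^{\pm},\dotsc,t_{p}^{\pm}]$, since polynomial and localization constructions are given by a fixed set of generators and relations (concretely, a Laurent polynomial over $A$ is a finite $A$-linear combination of monomials $t_{1}^{n_{1}}\dotsb t_{p}^{n_{p}}$, and such a finite combination and any witnessing relation already lives over some $A_{\lambda}$). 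Then the remaining point is that taking units $\mr{U}(-) = (-)^{\times}$ commutes with filtered colimits of rings. This follows from the general fact that $\mr{U}$ is a finitely presented functor: $u$ is a unit of $B := \varinjlim B_{\lambda}$ exactly when there is $v \in B$ with $uv = 1$, and since $B$ is a filtered colimit both $u$ and $v$, together with the single relation $uv=1$, are detected at some finite stage, giving surjectivity; injectivity is the statement that if $u_{\lambda} \in \mr{U}(B_{\lambda})$ maps to $1 \in B$ then $u_{\lambda}$ already maps to $1$ (equivalently to a unit equal to $1$) at some later stage, which again follows since the relation $u_{\lambda} \mapsto 1$ is detected at a finite stage.

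I would phrase this cleanly by citing the Stacks Project lemma already used elsewhere in the paper (for instance \cite[0B8W]{STACKS-PROJECT}, which handles exactly the compatibility of such algebraic constructions with filtered colimits), applied to the functor $A \mapsto \mr{U}(A[t_{1}^{\pm},\dotsc,t_{p}^{\pm}])$. Concretely: the ring-valued functor $A \mapsto A[t_{1}^{\pm},\dotsc,t_{p}^{\pm}]$ commutes with filtered colimits, and the unit-group functor commutes with filtered colimits of rings, so the composite does as well, degreewise in $p$. Since the maps $\mr{U}\mb{L}_{A}(\delta_{i}^{p})$ are induced functorially from the structure maps of $\mb{L}$ and $\mr{U}$, the collection of degreewise isomorphisms assembles into an isomorphism of cochain complexes, as desired.

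The main obstacle, such as it is, is purely bookkeeping rather than conceptual: one must be careful that the isomorphism is compatible with all the coface maps simultaneously, i.e. that it is genuinely a morphism of complexes and not merely a levelwise isomorphism of graded abelian groups. This is handled by noting that every coface map $\mb{L}_{A}(\delta_{i}^{p})$ is defined by the same monomial substitution rule $t_{j} \mapsto$ (a product of the $t$'s) uniformly in $A$, so these maps are natural in $A$ and therefore commute with the colimit comparison maps; consequently the induced maps on unit groups do too, and the alternating-sum differentials are preserved.
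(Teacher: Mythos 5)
Your proposal is correct and is essentially the paper's own argument: the paper's proof consists of the single observation that for each $n \ge 0$ the functor $(\mathrm{Ring}) \to (\mathrm{Ab})$, $A \mapsto (A[t_{1}^{\pm},\dotsc,t_{n}^{\pm}])^{\times}$, is locally of finite presentation (i.e.\ commutes with filtered colimits), which is exactly what you verify degreewise before assembling the levelwise isomorphisms, via naturality of the coface maps, into an isomorphism of complexes. Your write-up simply makes explicit the finite-stage detection of units and the compatibility bookkeeping that the paper leaves implicit.
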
 \begin{proof} For any $n \ge 0$, the functor $(\mr{Ring}) \to (\mr{Ab})$ sending $A \mapsto (A[t_{1}^{\pm},\dotsc,t_{n}^{\pm})^{\times}$ is locally of finite presentation. \end{proof}

\begin{lemma} \label{20170930-16} For any ring $A$, we have $\mathsf{h}^{2}(\mr{C}^{\bullet}(\mr{U}\mb{L}_{A})) = 0$. \end{lemma}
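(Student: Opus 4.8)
We need to show that $\mathsf{h}^{2}(\mr{C}^{\bullet}(\mr{U}\mb{L}_{A})) = 0$ for every ring $A$, i.e.\ the \v{C}ech complex obtained by applying the unit-group functor $\mr{U} = \mr{U}(-)$ to the cosimplicial Laurent-polynomial algebra $\mb{L}_{A}$ is exact in degree $2$. Explicitly, writing $m, p_{1}, p_{2} : A[t^{\pm}] \to A[t_{1}^{\pm}, t_{2}^{\pm}]$ and the three face maps $A[t_{1}^{\pm}, t_{2}^{\pm}] \to A[t_{1}^{\pm}, t_{2}^{\pm}, t_{3}^{\pm}]$, we must show that every $2$-cocycle in $\mr{U}(A[t_{1}^{\pm}, t_{2}^{\pm}])$ is a coboundary. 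The plan is to reduce to a computation on units of Laurent polynomial rings, for which there is a clean structural description.

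\textbf{Approach.} First I would reduce to a Noetherian, indeed finite-type, base. By \Cref{20170930-20}, the functor $A \mapsto \mr{C}^{\bullet}(\mr{U}\mb{L}_{A})$ commutes with filtered colimits, and taking cohomology also commutes with filtered colimits; since every ring is a filtered colimit of its finite-type $\Z$-subalgebras, it suffices to treat the case where $A$ is a finite-type $\Z$-algebra, hence Noetherian of finite Krull dimension. As with \Cref{20170930-13}, since the unit group of a ring and its reduction can differ only by nilpotents, I would first isolate the contribution of the nilradical and then assume $A$ reduced. The key input is the structure of $\mr{U}(A[t_{1}^{\pm}, \dotsc, t_{n}^{\pm}])$: for $A$ reduced, a unit of a Laurent polynomial ring is a product of a unit of $A$, a Laurent monomial $t_{1}^{e_{1}} \cdots t_{n}^{e_{n}}$ whose exponent vector is locally constant on $\Spec A$, and a higher-order term supported on the nilpotent locus. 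Concretely, for $A$ reduced the units decompose as $\mr{U}(A[t_{1}^{\pm}, \dotsc, t_{n}^{\pm})) \simeq \mr{U}(A) \oplus \H^{0}_{\et}(\Spec A, \Z)^{\oplus n}$, the second factor recording the monomial exponents, and this decomposition is natural in the cosimplicial structure.

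\textbf{Key steps.} The real content is then to identify the cochain complex $\mr{C}^{\bullet}(\mr{U}\mb{L}_{A})$ in terms of this decomposition and compute its degree-$2$ cohomology. The constant factor $\mr{U}(A)$ contributes the complex associated to the constant cosimplicial group, which is acyclic in positive degrees. The interesting factor is the monomial-exponent part: tracking the face maps $\mb{L}_{A}(\delta_{i})$, which send $t_{i}$ to products $t_{i} t_{i+1}$, shows that the exponent-lattice part reassembles into (a direct sum of copies of) the standard cochain complex computing the group cohomology of $\Z$ — equivalently the reduced \v{C}ech/bar complex of the simplicial structure on $\{\G_{m}^{\times p}\}$ — whose $\H^{2}$ vanishes. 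I would verify the vanishing by an explicit $2$-cocycle-to-$2$-coboundary calculation: given a $2$-cocycle, the locally-constant exponent data must satisfy the additive cocycle condition coming from the bilinearity of the monomial face maps, and this condition forces the exponent part to be a coboundary, while the $\mr{U}(A)$-part is handled by the constant-complex acyclicity.

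\textbf{Main obstacle.} The expected difficulty is not the reduced case but controlling the nilpotent contribution, exactly as in \Cref{20170930-13}: when $A$ is non-reduced, units of $A[t_{1}^{\pm}, t_{2}^{\pm}]$ acquire extra polynomial terms with nilpotent coefficients (as recorded by \cite[I, Lemma 3.12]{WEIBEL-KBOOK}), and these do not split off cleanly from the monomial part. I anticipate one would either (i) filter by powers of the nilradical and induct, comparing the complex for $A$ with that for $A_{\mr{red}}$ and showing the relative term $\mr{N}$-type contributions are themselves acyclic in degree $2$, or (ii) invoke the Units-Pic machinery and the $\mr{N}_{x}$-decomposition from \Cref{20170930-34}--\Cref{20170930-35} to reduce the nilpotent part to a statement about $\mr{N}_{t_{1},t_{2}}\mr{U}(A)$ that can be checked directly using the explicit form $1 + \beta(t)$ with $\beta$ having nilpotent coefficients, mirroring equation \labelcref{20170930-13-eqn-02}. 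Reconciling the naturality of the monomial decomposition with the $\mr{N}_{x}$ filtration so that the two pieces do not interfere is the step I would budget the most care for.
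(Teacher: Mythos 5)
Your outline tracks the paper's proof almost step for step: the reduction to finite type $\Z$-algebras via \Cref{20170930-20}, the structure theorem for units of Laurent polynomial rings (the paper quotes Neher's result, which needs no reducedness hypothesis: every unit of $A[t_{1}^{\pm},t_{2}^{\pm}]$ has the form $ut_{1}^{e_{1}}t_{2}^{e_{2}} + x(t_{1},t_{2})$ with $u \in A^{\times}$ and $x$ having nilpotent coefficients), the observation that the $\mr{U}(A)$-part is a coboundary (since $u \cdot u^{-1} \cdot u = u$), the vanishing of the exponent part in degree $2$ (the paper computes $\mb{d}_{\mr{U}\mb{L}_{A}}^{2}(t_{1}^{e_{1}}t_{2}^{e_{2}}) = t_{1}^{e_{1}}t_{3}^{-e_{2}}$ modulo nilpotents, forcing $e_{1}=e_{2}=0$; your identification with the bar complex of $\Z$, whose $\H^{2}$ vanishes, gives the same conclusion), and the filtration of $A \to A_{\mr{red}}$ by square-zero extensions, which is exactly your option (i).

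However, the step you explicitly defer --- that for a square-zero extension the relative contribution is acyclic in degree $2$ --- is where the entire content of the paper's proof lies, and neither of your two proposed strategies is carried out, so as it stands the proposal has a genuine gap at its central point. What the paper does there: for $I^{2}=0$, the reduction maps $\pi^{p} : (A[t_{1}^{\pm},\dotsc,t_{p}^{\pm}])^{\times} \to ((A/I)[t_{1}^{\pm},\dotsc,t_{p}^{\pm}])^{\times}$ are levelwise surjective, so combined with $\mathsf{h}^{2}(\mr{C}^{\bullet}(\mr{U}\mb{L}_{A/I})) = 0$ a diagram chase reduces the problem to showing that every cocycle lying in $\ker \pi^{2} = 1 + IA[t_{1}^{\pm},t_{2}^{\pm}]$ is a coboundary. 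Since $I^{2}=0$, the multiplicative cocycle condition on $1+x(t_{1},t_{2})$ becomes the additive identity $x(t_{1},t_{2}) - x(t_{1},t_{2}t_{3}) + x(t_{1}t_{2},t_{3}) - x(t_{2},t_{3}) = 0$, and the proof concludes with a combinatorial analysis of exponent supports in $\Z^{\oplus 3}$: each of the four terms has support in one of four hyperplanes, so cancellation forces the support of $x$ into the lines $\Z(1,0) \cup \Z(1,1) \cup \Z(0,1)$, and the resulting linear relations collapse to $x_{e,0} = x_{0,e} = -x_{e,e}$ for all $e$, whence $1+x(t_{1},t_{2}) = \mb{d}_{\mr{U}\mb{L}_{A}}^{1}\bigl(1 - \sum_{e \in \Z} x_{e,e}t_{1}^{e}\bigr)$. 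Note also that the difficulty you budget the most care for --- reconciling the monomial decomposition with the nilpotent filtration --- does not actually arise: after the diagram chase one works entirely inside the subgroup $1 + I\mb{L}_{A}([p])$, where the problem is purely additive and the unit and monomial parts have already been stripped away; in particular the Units-Pic machinery of your option (ii) is not needed for this lemma (it is the tool for \Cref{20170930-13}, not for this one).
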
 \begin{proof} By writing $A$ as the filtered colimit of subrings which are finite type $\Z$-algebras, by \Cref{20170930-20} we may reduce to the case when $A$ is a finite type $\Z$-algebra. By replacing $\Spec A$ by a connected component, we may assume that $\Spec A$ is connected. Let $\mf{n} \subset A$ be the nilradical of $A$. By \cite[Corollary 6]{NEHER-IANEITGAOAUPG}, a unit $\xi$ of $A[t_{1}^{\pm},t_{2}^{\pm}]$ is of the form \begin{align} \label{20170930-16-eqn-02} \xi = ut_{1}^{e_{1}}t_{2}^{e_{2}} + x(t_{1},t_{2}) \end{align} where $u \in A^{\times}$ is a unit and $(e_{1},e_{2}) \in \Z^{\oplus 2}$ is an ordered pair of integers and $x(t_{1},t_{2}) \in \mf{n}A[t_{1}^{\pm},t_{2}^{\pm}]$ is a Laurent polynomial all of whose coefficients are nilpotent. We have that each unit $u \in A^{\times} \subset (A[t_{1}^{\pm},t_{2}^{\pm}])^{\times}$ is in the image of $\mb{d}_{\mr{U}\mb{L}_{A}}^{1}$, namely the image of the unit $u \in A^{\times} \subset (A[t_{1}^{\pm}])^{\times}$ since $u \cdot u^{-1} \cdot u = u$. Hence we may assume that the unit $u$ of \labelcref{20170930-16-eqn-02} is equal to $1$. Suppose $\xi \in \ker \mb{d}_{\mr{U}\mb{L}_{A/\mf{n}}}^{2}$. By reduction $A \to A/\mf{n}$ we have that \begin{align*} \mb{d}_{\mr{U}\mb{L}_{A}}^{2}(ut_{1}^{e_{1}}t_{2}^{e_{2}}) &= (ut_{1}^{e_{1}}t_{2}^{e_{2}}) \cdot (ut_{1}^{e_{1}}(t_{2}t_{3})^{e_{2}})^{-1} \cdot (u(t_{1}t_{2})^{e_{1}}t_{3}^{e_{2}}) \cdot (ut_{2}^{e_{1}}t_{3}^{e_{2}})^{-1} \\ &= t_{1}^{e_{1}}t_{3}^{-e_{2}} \end{align*} must be equal to $1$, hence $e_{1} = e_{2} = 0$. This implies that $\mathsf{h}^{2}(\mr{C}^{\bullet}(\mr{U}\mb{L}_{A/\mf{n}^{1}})) = 0$. \par We have a sequence \[ A/\mf{n}^{s} \to A/\mf{n}^{s-1} \to \dotsb \to A/\mf{n}^{2} \to A/\mf{n}^{1} \] where each map is a surjective ring map with square-zero kernel. Hence, since the complex $\mr{C}^{\bullet}(\mr{U}\mb{L}_{A})$ is functorial in $A$, it suffices to show that, for any ring $A$ and ideal $I \subset A$ satisfying $I^{2} = 0$, if $\mathsf{h}^{2}(\mr{C}^{\bullet}(\mr{U}\mb{L}_{A/I})) = 0$ then $\mathsf{h}^{2}(\mr{C}^{\bullet}(\mr{U}\mb{L}_{A})) = 0$. The quotient $A \to A/I$ induces a morphism $\mr{C}^{\bullet}(\mr{U}\mb{L}_{A}) \to \mr{C}^{\bullet}(\mr{U}\mb{L}_{A/I})$ of complexes of abelian groups, part of which is a commutative diagram \begin{equation} \label{20170930-16-eqn-03} \begin{tikzpicture}[>=angle 90, baseline=(current bounding box.center)] 
\matrix[matrix of math nodes,row sep=3.5em, column sep=2em, text height=1.7ex, text depth=0.5ex] { 
|[name=11]| (A[t_{1}^{\pm}])^{\times} & |[name=12]| (A[t_{1}^{\pm},t_{2}^{\pm}])^{\times} & |[name=13]| (A[t_{1}^{\pm},t_{2}^{\pm},t_{3}^{\pm}])^{\times} \\ 
|[name=21]| ((A/I)[t_{1}^{\pm}])^{\times} & |[name=22]| ((A/I)[t_{1}^{\pm},t_{2}^{\pm}])^{\times} & |[name=23]| ((A/I)[t_{1}^{\pm},t_{2}^{\pm},t_{3}^{\pm}])^{\times} \\ 
}; 
\draw[->,font=\scriptsize]
(11) edge node[above=0pt] {$\mb{d}_{\mr{U}\mb{L}_{A}}^{1}$} (12) (12) edge node[above=0pt] {$\mb{d}_{\mr{U}\mb{L}_{A}}^{2}$} (13) (21) edge node[above=0pt] {$\mb{d}_{\mr{U}\mb{L}_{A/I}}^{1}$} (22) (22) edge node[above=0pt] {$\mb{d}_{\mr{U}\mb{L}_{A/I}}^{2}$} (23) (11) edge[->>] node[left=-1pt] {$\pi^{1}$} (21) (12) edge[->>] node[left=-1pt] {$\pi^{2}$} (22) (13) edge[->>] node[left=-1pt] {$\pi^{3}$} (23); \end{tikzpicture} \end{equation} where each vertical arrow $\pi^{1},\pi^{2},\pi^{3}$ is surjective since $I$ is square-zero. By a diagram chase on \labelcref{20170930-16-eqn-03}, to show that the top row is exact it suffices to show that every element of $(\ker \mb{d}_{\mr{U}\mb{L}_{A}}^{2}) \cap (\ker \pi^{2})$ is in the image of $\mb{d}_{\mr{U}\mb{L}_{A}}^{1}$. We have $\ker \pi^{2} = 1 + IA[t_{1}^{\pm},t_{2}^{\pm}]$; moreover, since $I$ is square-zero, the (multiplicative) condition that $1 + x(t_{1},t_{2}) \in \ker \mb{d}_{\mr{U}\mb{L}_{A}}^{2}$ is equivalent to the (additive) condition that the element \begin{align} \label{20170930-16-eqn-04} x(t_{1},t_{2}) - x(t_{1},t_{2}t_{3}) + x(t_{1}t_{2},t_{3}) - x(t_{2},t_{3}) \end{align} of $A[t_{1}^{\pm},t_{2}^{\pm},t_{3}^{\pm}]$ is equal to zero. Let \begin{align*} \mr{H}_{0} &:= \{e_{3} = 0\} \\ \mr{H}_{1} &:= \{e_{2} = e_{3}\} \\ \mr{H}_{2} &:= \{e_{1} = e_{2}\} \\ \mr{H}_{3} &:= \{e_{1} = 0\} \end{align*} be hyperplanes of $\Z^{\oplus 3} = \{(e_{1},e_{2},e_{3})\}$ defined by the equations corresponding to the maps $\mb{L}_{A}(\mr{p}_{0}^{2}) , \mb{L}_{A}(\mr{p}_{1}^{2}) , \mb{L}_{A}(\mr{p}_{2}^{2}) , \mb{L}_{A}(\mr{p}_{3}^{2})$ in the sense that the image of $\Z^{\oplus 2}$ under $\mb{L}_{A}(\mr{p}_{i}^{2})$ is $\mr{H}_{i} \subset \Z^{\oplus 3}$. Then the pairwise intersections \begin{align*} \label{20170930-16-eqn-05} \begin{aligned} \mr{H}_{0} \cap \mr{H}_{1} &= \Z(1,0,0) &&& \mr{H}_{1} \cap \mr{H}_{2} &= \Z(1,1,1) \\ \mr{H}_{0} \cap \mr{H}_{2} &= \Z(1,1,0) &&& \mr{H}_{1} \cap \mr{H}_{3} &= \Z(0,1,1) \\ \mr{H}_{0} \cap \mr{H}_{3} &= \Z(0,1,0) &&& \mr{H}_{2} \cap \mr{H}_{3} &= \Z(0,0,1) \end{aligned} \end{align*} are all distinct. Let \[ x_{e_{1},e_{2}} \in I \] be the coefficient of $t_{1}^{e_{1}}t_{2}^{e_{2}}$ in $x(t_{1},t_{2})$. Then if $(e_{1},e_{2}) \in \Z^{\oplus 2}$ is an ordered pair for which $x_{e_{1},e_{2}} \ne 0$, then we must have \[ (e_{1},e_{2}) \in \Z(1,0) \cup \Z(1,1) \cup \Z(0,1) \] in $\Z^{\oplus 2}$. Moreover, saying that \labelcref{20170930-16-eqn-04} is equal to zero translates to the collection of equations \begin{align*} \label{20170930-16-eqn-06} \begin{aligned} x_{e,0} - x_{e,0} &= 0 &&& x_{e,e} - x_{e,e} &= 0 \\ x_{e,e} + x_{e,0} &= 0 &&& x_{0,e} + x_{e,e} &= 0 \\ x_{0,e} - x_{e,0} &= 0 &&& x_{0,e} - x_{0,e} &= 0 \end{aligned} \end{align*} for all $e \in \Z$, which simplifies to \[ x_{e,0} = x_{0,e} = -x_{e,e} \] for all $e \in \Z$. Then \[ \textstyle 1+x(t_{1},t_{2}) = \mb{d}_{\mr{U}\mb{L}_{A}}^{1}(1-\sum_{e \in \Z} x_{e,e}t_{1}^{e}) \] so we have the desired result. \end{proof}

\section{Proof of the main theorem} \label{sec-05}

In this section we prove \Cref{20170930-01}.

Our argument, in outline, is that of the proof of \cite[II, Lemma 1\ensuremath{'}]{GABBER-THESIS}. Namely, we compute $\H^{2}_{\et}(\mc{G} , \G_{m,\mc{G}})$ using the Leray spectral sequence associated to the map $\pi$ and sheaf $\G_{m,\mc{G}}$, which is of the form \begin{align} \label{sec-05-eqn-01} \mr{E}_{2}^{p,q} = \H^{p}_{\et}(S , \mb{R}^{q}\pi_{\ast}\G_{m,\mc{G}}) \implies \H^{p+q}_{\et}(\mc{G} , \G_{m,\mc{G}}) \end{align} with differentials $\mr{d}_{2}^{p,q} : \mr{E}_{2}^{p,q} \to \mr{E}_{2}^{p+2,q-1}$. 

The stalks of $\mb{R}^{2}\pi_{\ast}\G_{m,\mc{G}}$ are described by \Cref{20170930-08}.

\begin{setup}[Descent spectral sequence for $\mr{B}\G_{m}$] \label{20170930-27} Let $A$ be a ring and let \[ \xi : \Spec A \to \mr{B}\G_{m,A} \] be the smooth cover associated to the trivial $\G_{m,A}$-torsor. The cohomological descent spectral sequence associated to $\xi$ gives a spectral sequence \begin{align} \label{20170930-27-eqn-01} \mr{E}_{1}^{p,q} = \H^{q}_{\et}(\G_{m,A}^{\times p} , \G_{m}) \implies \H^{p+q}_{\et}(\mr{B}\G_{m,A},\G_{m}) \end{align} where the $q$th row $\mr{E}_{1}^{\bullet,q} = \H^{q}_{\et}(\G_{m,A}^{\times \bullet} , \G_{m})$ can be realized as the complex $\mr{C}^{\bullet}(\mr{F}\mb{L}_{A})$ (see \Cref{20170930-33} and \Cref{20170930-14}) where the functor $\mr{F} : (\mr{Ring}) \to (\mr{Ab})$ is defined by $\mr{F}(R) := \H^{q}_{\et}(\Spec R , \G_{m})$. The lower-left part of the $\mr{E}_{1}$-page of the spectral sequence \labelcref{20170930-27-eqn-01} is \begin{center}\begin{tikzpicture}[>=angle 90] 
\matrix[matrix of math nodes,row sep=1em, column sep=1.3em, text height=1.5ex, text depth=0.25ex] { 
|[name=11]| \H^{3}_{\et}(\G_{m,A}^{\times 0},\G_{m}) & |[name=12]| \H^{3}_{\et}(\G_{m,A}^{\times 1},\G_{m}) & |[name=13]| \H^{3}_{\et}(\G_{m,A}^{\times 2},\G_{m}) & |[name=14]| \H^{3}_{\et}(\G_{m,A}^{\times 3},\G_{m}) \\ 
|[name=21]| \H^{2}_{\et}(\G_{m,A}^{\times 0},\G_{m}) & |[name=22]| \H^{2}_{\et}(\G_{m,A}^{\times 1},\G_{m}) & |[name=23]| \H^{2}_{\et}(\G_{m,A}^{\times 2},\G_{m}) & |[name=24]| \H^{2}_{\et}(\G_{m,A}^{\times 3},\G_{m}) \\ 
|[name=31]| \H^{1}_{\et}(\G_{m,A}^{\times 0},\G_{m}) & |[name=32]| \H^{1}_{\et}(\G_{m,A}^{\times 1},\G_{m}) & |[name=33]| \H^{1}_{\et}(\G_{m,A}^{\times 2},\G_{m}) & |[name=34]| \H^{1}_{\et}(\G_{m,A}^{\times 3},\G_{m}) \\ 
|[name=41]| \H^{0}_{\et}(\G_{m,A}^{\times 0},\G_{m}) & |[name=42]| \H^{0}_{\et}(\G_{m,A}^{\times 1},\G_{m}) & |[name=43]| \H^{0}_{\et}(\G_{m,A}^{\times 2},\G_{m}) & |[name=44]| \H^{0}_{\et}(\G_{m,A}^{\times 3},\G_{m}) \\ 
}; 
\draw[->,font=\scriptsize]
(11) edge (12) (12) edge (13) (13) edge (14)
(21) edge (22) (22) edge (23) (23) edge (24)
(31) edge (32) (32) edge (33) (33) edge (34)
(41) edge (42) (42) edge (43) (43) edge (44); \end{tikzpicture} \end{center} where $\mr{d}_{1}^{0,q}$ is the zero map for all $q \ge 0$ since $\mr{B}\G_{m,A}$ is the quotient of $\Spec A$ by the trivial action of $\G_{m,A}$. \end{setup}

\begin{lemma} \label{20170930-08} Assume the setup of \Cref{20170930-27}. For any strictly henselian local ring $A$, we have $\H^{2}_{\et}(\mr{B}\G_{m,A},\G_{m}) = 0$. \end{lemma}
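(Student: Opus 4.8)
The plan is to read off $\H^{2}_{\et}(\mr{B}\G_{m,A},\G_{m})$ from the descent spectral sequence \labelcref{20170930-27-eqn-01} of \Cref{20170930-27}. As this is a first-quadrant spectral sequence, the abutment in total degree $2$ carries a finite filtration whose graded pieces are $\mr{E}_{\infty}^{2,0}$, $\mr{E}_{\infty}^{1,1}$, and $\mr{E}_{\infty}^{0,2}$. Each $\mr{E}_{\infty}^{p,q}$ is a subquotient of $\mr{E}_{2}^{p,q}$, so it suffices to show that the three terms $\mr{E}_{2}^{2,0}$, $\mr{E}_{2}^{1,1}$, and $\mr{E}_{2}^{0,2}$ all vanish; then $\H^{2}_{\et}(\mr{B}\G_{m,A},\G_{m}) = 0$ follows.

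For the first two terms I would invoke the computations of the preceding two sections. Recalling from \Cref{20170930-27} that the $q$th row of the $\mr{E}_{1}$-page is the complex $\mr{C}^{\bullet}(\mr{F}\mb{L}_{A})$ with $\mr{F}(R) = \H^{q}_{\et}(\Spec R,\G_{m})$, we have $\mr{E}_{2}^{2,0} = \mathsf{h}^{2}(\mr{C}^{\bullet}(\mr{U}\mb{L}_{A}))$ and $\mr{E}_{2}^{1,1} = \mathsf{h}^{1}(\mr{C}^{\bullet}(\mr{Pic}\mb{L}_{A}))$. The former vanishes by \Cref{20170930-16}. For the latter, \Cref{20170930-04} gives $\ker(\mb{d}_{\mr{Pic}\mb{L}_{A}}^{1}) = 0$, and since $\mathsf{h}^{1}(\mr{C}^{\bullet}(\mr{Pic}\mb{L}_{A}))$ is a quotient of this kernel we conclude $\mr{E}_{2}^{1,1} = 0$.

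It remains to treat $\mr{E}_{2}^{0,2}$. Because $\mr{d}_{1}^{0,q} = 0$ for all $q$ (as noted in \Cref{20170930-27}, since $\mr{B}\G_{m,A}$ is the quotient of $\Spec A$ by the trivial action), the term $\mr{E}_{2}^{0,2}$ equals $\mr{E}_{1}^{0,2} = \H^{2}_{\et}(\Spec A,\G_{m})$. Since $A$ is strictly henselian local, $\Spec A$ is punctual for the \'etale topology: every \'etale covering of $\Spec A$ splits, so the global-sections functor on the small \'etale site coincides with the stalk at the closed geometric point and is therefore exact. Consequently $\H^{q}_{\et}(\Spec A,\mc{F}) = 0$ for every abelian sheaf $\mc{F}$ and every $q > 0$; in particular $\mr{E}_{2}^{0,2} = \H^{2}_{\et}(\Spec A,\G_{m}) = 0$.

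The genuinely hard input is the vanishing of $\mr{E}_{2}^{1,1}$, which rests on \Cref{20170930-04} and ultimately on the key \Cref{20170930-13}; relative to those results the present lemma is a bookkeeping exercise in assembling the spectral sequence. The only other point deserving care is the identification $\mr{E}_{2}^{0,2} = \H^{2}_{\et}(\Spec A,\G_{m})$ and its vanishing, which uses nothing beyond the strict henselian-ness of $A$ and is independent of the Laurent-polynomial machinery of \Cref{sec-03} and \Cref{sec-04}.
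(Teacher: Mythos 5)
Your proposal is correct and follows essentially the same route as the paper: the paper's proof likewise reads off the vanishing of $\H^{2}_{\et}(\mr{B}\G_{m,A},\G_{m})$ from the descent spectral sequence of \Cref{20170930-27}, citing \Cref{20170930-04} for $\mr{E}_{2}^{1,1}=0$, \Cref{20170930-16} for $\mr{E}_{2}^{2,0}=0$, and the vanishing of $\mr{E}_{1}^{0,q}=\H^{q}_{\et}(\Spec A,\G_{m})$ for $q\ge 1$ (strict henselian-ness) for the remaining term. Your write-up merely spells out the bookkeeping (the finite filtration on the abutment and the subquotient argument) that the paper leaves implicit.
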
 \begin{proof} We have $\mr{E}_{1}^{0,q} = \H^{q}_{\et}(\Spec A , \G_{m}) = 0$ for any $q \ge 1$ since $A$ is strictly henselian. We have $\mr{E}_{2}^{1,1} = 0$ by \Cref{20170930-04} and $\mr{E}_{2}^{2,0} = 0$ by \Cref{20170930-16}. \end{proof}

\begin{remark} \label{20170930-25} We show that, in the proof of \Cref{20170930-08}, it is possible to reduce to the case when $A$ is a reduced ring; the reducedness assumption simplifies the proof of \Cref{20170930-16}. By standard limit arguments, we may assume that $A$ is a finite type $\Z$-algebra. Then the reduction $A \to A_{\mr{red}}$ can be factored as a finite sequence of square-zero thickenings. Thus we reduce to showing that if $A \to A_{0}$ is a surjection of rings whose kernel $I$ is square-zero, then the reduction map \begin{align} \label{20170930-25-eqn-02} &\H^{2}_{\et}(\mr{B}\G_{m,A} , \G_{m}) \to \H^{2}_{\et}(\mr{B}\G_{m,A_{0}} , \G_{m}) \end{align} is an isomorphism. Set $\ms{X} := \mr{B}\G_{m,A}$ and $\ms{X}_{0} := \mr{B}\G_{m,A_{0}}$ and let $i : \ms{X}_{0} \to \ms{X}$ be the closed immersion. We may use either the big \'etale site $(\mr{Sch}/\ms{X})_{\et}$ or the lisse-\'etale site $\LisEt(\ms{X})$ to compute cohomology on $\ms{X}$, since the inclusion functor of sites \[ u : \LisEt(\ms{X}) \to (\mr{Sch}/\ms{X})_{\et} \] induces a restriction functor on abelian sheaves \[ u^{-1} : \mr{Ab}((\mr{Sch}/\ms{X})_{\et}) \to \mr{Ab}(\LisEt(\ms{X})) \] which is exact and admits an exact left adjoint $u_{!}$ (see \cite[0788 (1)]{STACKS-PROJECT}). There is an exact sequence \[1 \to 1+I \to \G_{m,\ms{X}} \to i_{\ast}\G_{m,\ms{X}_{0}} \to 1 \] of abelian sheaves on $\LisEt(\ms{X})$; here left exactness follows from the fact that for any scheme $X$ and smooth morphism $X \to \ms{X}$ the composition $X \to \ms{X} \to \Spec A$ is flat. We have an induced long exact sequence \[ \dots \to \H^{p}_{\et}(\ms{X},I) \to \H^{p}_{\et}(\ms{X},\G_{m,\ms{X}}) \to \H^{p}_{\et}(\ms{X},i_{\ast}\G_{m,\ms{X}_{0}}) \to \H^{p+1}_{\et}(\ms{X},I) \to \dotsb \] in cohomology. We have $\H^{p}_{\et}(\ms{X},i_{\ast}\G_{m,\ms{X}_{0}}) \simeq \H^{p}_{\et}(\ms{X}_{0},\G_{m,\ms{X}_{0}})$ for $p \ge 0$ since pushforward along a closed immersion in the \'etale topology is exact (using e.g. \cite[04E3]{STACKS-PROJECT}). It suffices now to show that if $\ms{F}$ is any quasi-coherent $\mc{O}_{\ms{X}}$-module then $\H^{p}_{\et}(\ms{X},\ms{F}) = 0$ for all $p > 0$. The category of quasi-coherent $\mc{O}_{\ms{X}}$-modules corresponds to the category $\mc{C}$ of $\Z$-graded $A$-modules. Denoting by $\pi : \ms{X} \to \Spec A$ the structure map, the pushforward functor $\pi_{\ast} : \mr{QCoh}(\ms{X}) \to \mr{QCoh}(A)$ corresponds to sending a $\Z$-graded module $M_{\bullet} = \bigoplus_{n \in \Z} M_{n}$ to the degree zero component $M_{0}$. Since this is an exact functor, we have that $\pi$ is cohomologically affine \cite[Definition 3.1]{ALPER-GMSFAS}. Since $\pi$ has affine diagonal, we have the desired result by \cite[Remark 3.5]{ALPER-GMSFAS}. \end{remark}

\begin{pg}[Proof of {\Cref{20170930-01}}] \label{01} For any strictly henselian local ring $A$, we have \[ \H^{2}_{\et}(\mr{B}\G_{m,A} , \G_{m,\mr{B}\G_{m,A}}) = 0 \] by \Cref{20170930-08}, hence \[ \mb{R}^{2}\pi_{\mc{G},\ast}\G_{m,\mc{G}} = 0 \] since its stalks vanish. By \Cref{20171214-02}, we have \[ \mb{R}^{1}\pi_{\mc{G},\ast}\G_{m,\mc{G}} \simeq \underline{\Hom}_{\mr{Ab}(S)}(\G_{m,S},\G_{m,S}) = \underline{\Z} \] so the Leray spectral sequence \labelcref{sec-05-eqn-01} gives an exact sequence \begin{align} \label{01-eqn-01} \H^{0}_{\et}(S , \underline{\Z}) \stackrel{\dagger}{\to} \H^{2}_{\et}(S , \G_{m,S}) \stackrel{\pi_{\mc{G}}^{\ast}}{\to} \H^{2}_{\et}(\mc{G} , \G_{m,\mc{G}}) \to \H^{1}_{\et}(S , \underline{\Z}) \end{align} where by \Cref{20171214-03} the first map $\dagger$ sends $1 \mapsto [\mc{G}]$. By \Cref{20171216-03} the last term $\H^{1}_{\et}(S , \underline{\Z})$ is a torsion-free abelian group. If $\alpha \in \H^{2}_{\et}(S , \G_{m,S})$ is a class such that $\pi_{\mc{G}}^{\ast}(\alpha)$ is $n$-torsion for some $n \in \Z_{\ge 0}$, then $\pi_{\mc{G}}^{\ast}(n\alpha) = 0$, hence $n\alpha = m[\mc{G}]$ for some $m \in \Z_{\ge 0}$; since by assumption $[\mc{G}]$ is a torsion class, we have that $\alpha$ is torsion; in other words the restriction $\pi_{\mc{G}}^{\ast} : \H^{2}_{\et}(S , \G_{m,S})_{\mr{tors}} \to \H^{2}_{\et}(\mc{G} , \G_{m,\mc{G}})_{\mr{tors}}$ is surjective. Hence we have the desired result. \qed \end{pg}

\begin{remark} \label{20180331-02} As pointed out to me by Siddharth Mathur, in \Cref{20170930-01}, the restriction map \[ \pi_{\mc{G}}^{\ast} : \Br'(S) \to \Br'(\mc{G}) \] is not necessarily surjective if $[\mc{G}] \in \H_{\et}^{2}(S,\G_{m,S})$ is a nontorsion class. Let $S$ be a scheme for which $\H_{\et}^{2}(S,\G_{m,S})$ is not a torsion group; let $\alpha \in \H_{\et}^{2}(S,\G_{m,S})$ be a nontorsion element, and let $\pi_{\mc{G}} : \mc{G} \to S$ be the $\G_{m,S}$-gerbe corresponding to the class $2\alpha \in \H_{\et}^{2}(S,\G_{m,S})$. Then $\pi_{\mc{G}}^{\ast}(\alpha)$ is a 2-torsion class of $\H_{\et}^{2}(\mc{G},\G_{m,\mc{G}})$. We show that there does not exist any torsion element $\beta \in \H_{\et}^{2}(S,\G_{m,S})$ such that $\pi_{\mc{G}}^{\ast}(\alpha) = \pi_{\mc{G}}^{\ast}(\beta)$. If so, then $\alpha-\beta = n[\mc{G}] = 2n\alpha$ for some $n$, which means $(2n-1)\alpha$ is torsion, which contradicts our assumption that $\alpha$ is nontorsion. Taking as our $S$ above the normal surface of Mumford \cite[Remarques 1.11, b]{Gro68b} for which $\H_{\et}^{2}(S,\G_{m,S})$ is not a torsion group, we obtain an example of a $\G_{m,S}$-gerbe $\pi_{\mc{G}} : \mc{G} \to S$ for which the restriction \[ \pi_{\mc{G}}^{\ast} : \H_{\et}^{2}(S,\G_{m,S}) \to \H_{\et}^{2}(\mc{G},\G_{m,\mc{G}}) \] is surjective (by \labelcref{01-eqn-01}, using that $\H_{\et}^{1}(S,\underline{\Z}) = 0$ by \cite[VIII, Prop. 5.1]{SGA7-I} since $S$ is geometrically unibranch) but the restriction to the torsion subgroups is not surjective. \qed \end{remark}

\appendix

\section{Torsors under torsion-free abelian groups} \label{sec-06}

In \cite[Corollary 7.9.1]{WEIBEL-PIACF}, it is proved that $\H^{1}_{\et}(S,\underline{\Z})$ is a torsion-free abelian group if $S$ is a quasi-compact quasi-separated scheme. In this section, we record a different proof which works over an arbitrary site. This argument is from \cite[093J]{STACKS-PROJECT}.

Let $\mc{S}$ be a site. For any set $\mr{S}$, let $\underline{\mr{S}}$ denote the constant sheaf on $\mc{S}$ associated to $\mr{S}$. 

\begin{lemma} \label{20171216-01} Let $f : \mr{S} \to \mr{T}$ be a surjective function between sets. Then the induced map \[ \Gamma(\mc{S} , f) : \Gamma(\mc{S},\underline{\mr{S}}) \to \Gamma(\mc{S},\underline{\mr{T}}) \] is surjective. \end{lemma}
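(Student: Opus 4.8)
The plan is to unwind what surjectivity of $\Gamma(\mc{S},f) : \Gamma(\mc{S},\underline{\mr{S}}) \to \Gamma(\mc{S},\underline{\mr{T}})$ means at the level of sheaves. Recall that the constant sheaf $\underline{\mr{S}}$ is the sheafification of the constant presheaf with value $\mr{S}$, so a global section $t \in \Gamma(\mc{S},\underline{\mr{T}})$ is not literally a single element of $\mr{T}$ but rather a locally constant function: there is a covering $\{U_{i} \to U\}$ (indeed, working over the final object or over each object of $\mc{S}$) on which $t$ is given by a genuine element $t_{i} \in \mr{T}$, with the $t_{i}$ agreeing on overlaps in the sheaf sense. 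The point of the lemma is that surjectivity of $f$ lets us lift each such local value.

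First I would reduce to the local picture. A global section of $\underline{\mr{T}}$ is, by definition of sheafification, represented on a suitable covering by locally constant data valued in $\mr{T}$; concretely, for each object $V$ of $\mc{S}$ appearing in the covering the section restricts to an element $t_{V} \in \mr{T}$ (constant on $V$). Since $f : \mr{S} \to \mr{T}$ is surjective, for each such value $t_{V}$ I can choose a preimage $s_{V} \in \mr{S}$ with $f(s_{V}) = t_{V}$; this uses nothing more than surjectivity of $f$ as a map of sets (and the axiom of choice). The candidate lift is then the section of $\underline{\mr{S}}$ assembled from the $s_{V}$.

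The step that needs genuine care is checking that the locally chosen preimages $s_{V}$ glue to a well-defined global section of $\underline{\mr{S}}$. The compatibility we must verify is that wherever two pieces of the section $t$ agree — i.e.\ wherever $t_{V} = t_{W}$ after restriction to a common refinement — the chosen preimages $s_{V}$ and $s_{W}$ also agree there. This is where the argument is subtlest, because a naive choice of preimages need not be consistent across overlaps. The clean way to handle this is to choose, once and for all, a single set-theoretic section $g : \mr{T} \to \mr{S}$ of $f$ (so $f \circ g = \id_{\mr{T}}$, again by choice), and to define the lift as the image of $t$ under the induced map $\Gamma(\mc{S},\underline{g}) : \Gamma(\mc{S},\underline{\mr{T}}) \to \Gamma(\mc{S},\underline{\mr{S}})$. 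Because $\underline{(-)}$ is a functor from sets to sheaves and sheafification is functorial, $\underline{f} \circ \underline{g} = \underline{f \circ g} = \underline{\id_{\mr{T}}} = \id_{\underline{\mr{T}}}$, so $\Gamma(\mc{S},f) \circ \Gamma(\mc{S},g) = \id$ on global sections, which forces $\Gamma(\mc{S},f)$ to be surjective. This functorial formulation sidesteps the gluing headache entirely, since a single global set-map $g$ automatically produces consistent local choices; I expect the main (indeed only real) obstacle to be resisting the temptation to make local choices independently and instead noticing that a global section of $f$ gives a global section of $\Gamma(\mc{S},f)$ for free.
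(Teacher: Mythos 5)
Your proposal is correct and its decisive step --- choosing a set-theoretic section $g : \mr{T} \to \mr{S}$ of $f$ and invoking functoriality of the constant-sheaf construction to get $\Gamma(\mc{S},f) \circ \Gamma(\mc{S},g) = \id_{\Gamma(\mc{S},\underline{\mr{T}})}$ --- is exactly the paper's proof. The preliminary discussion of local lifts and gluing is harmless motivation, but the argument you settle on coincides with the one in the paper.
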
 \begin{proof} Choose a function $g : \mr{T} \to \mr{S}$ satisfying $fg = \id_{\mr{T}}$. By functoriality of the ``constant sheaf'' functor, we have $\Gamma(\mc{S},f) \circ \Gamma(\mc{S},g) = \id_{\Gamma(\mc{S},\underline{\mr{T}})}$. \end{proof}

\begin{lemma} \label{20171216-02} Let $0 \to \mr{A} \to \mr{B} \to \mr{C} \to 0$ be an exact sequence of abelian groups. Then the induced map \[ \H^{1}(\mc{S} , \underline{\mr{A}}) \to \H^{1}(\mc{S} , \underline{\mr{B}}) \] is injective. \end{lemma}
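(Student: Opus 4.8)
The plan is to upgrade the given short exact sequence of abelian groups to a short exact sequence of constant sheaves and then read off the claim from the resulting long exact cohomology sequence, using \Cref{20171216-01} to kill the relevant connecting map.

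First I would observe that the functor $\mr{S} \mapsto \underline{\mr{S}}$ from abelian groups to $\mr{Ab}(\mc{S})$ is exact. It factors as the constant-presheaf functor, which is visibly exact since exactness of presheaves is tested objectwise, followed by sheafification, which is exact because it is a left adjoint preserving finite limits. (Equivalently, $\mr{S} \mapsto \underline{\mr{S}}$ is the inverse-image functor of the morphism of topoi from $\mc{S}$ to the punctual topos, hence exact.) Therefore the exact sequence $0 \to \mr{A} \to \mr{B} \to \mr{C} \to 0$ induces a short exact sequence
\[ 0 \to \underline{\mr{A}} \to \underline{\mr{B}} \to \underline{\mr{C}} \to 0 \]
of abelian sheaves on $\mc{S}$.

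Next I would pass to the associated long exact sequence in cohomology, the relevant segment of which is
\[ \Gamma(\mc{S},\underline{\mr{B}}) \to \Gamma(\mc{S},\underline{\mr{C}}) \stackrel{\delta}{\to} \H^{1}(\mc{S},\underline{\mr{A}}) \to \H^{1}(\mc{S},\underline{\mr{B}}). \]
By exactness, the kernel of the map $\H^{1}(\mc{S},\underline{\mr{A}}) \to \H^{1}(\mc{S},\underline{\mr{B}})$ equals the image of the connecting homomorphism $\delta$. Thus it suffices to show that $\delta = 0$, equivalently that the map $\Gamma(\mc{S},\underline{\mr{B}}) \to \Gamma(\mc{S},\underline{\mr{C}})$ on global sections is surjective.

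Finally, since $0 \to \mr{A} \to \mr{B} \to \mr{C} \to 0$ is exact, the underlying map of sets $\mr{B} \to \mr{C}$ is surjective, so the surjectivity of $\Gamma(\mc{S},\underline{\mr{B}}) \to \Gamma(\mc{S},\underline{\mr{C}})$ is exactly the content of \Cref{20171216-01}. Hence $\delta = 0$ and the map in question is injective. The only step that is not entirely formal is the exactness of the constant-sheaf functor, and I expect that to be the main (though minor) point to pin down; the remainder is the standard long exact sequence together with the preceding lemma.
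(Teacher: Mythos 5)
Your proposal is correct and follows essentially the same route as the paper: form the short exact sequence of constant sheaves, take the long exact cohomology sequence, and use \Cref{20171216-01} to show the connecting map vanishes. The only difference is that you make explicit the exactness of the constant-sheaf functor, which the paper uses implicitly; this is a fine (and valid) point to pin down.
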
 \begin{proof} As part of the long exact sequence in cohomology, we obtain an exact sequence \[ \Gamma(\mc{S},\underline{\mr{B}}) \to \Gamma(\mc{S},\underline{\mr{C}}) \to \H^{1}(\mc{S} , \underline{\mr{A}}) \to \H^{1}(\mc{S} , \underline{\mr{B}}) \] where the first arrow is surjective by \Cref{20171216-01}, hence the third arrow is injective. \end{proof}

\begin{lemma} \label{20171216-03} Let $\mr{A}$ be a torsion-free abelian group. Then $\H^{1}(\mc{S},\underline{\mr{A}})$ is a torsion-free abelian group. \end{lemma}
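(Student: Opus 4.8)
The plan is to exploit the functoriality built into \Cref{20171216-02} by feeding it the multiplication-by-$n$ map. Since $\mr{A}$ is torsion-free, for each integer $n \ge 1$ the map $n : \mr{A} \to \mr{A}$ is injective, so there is a short exact sequence of abelian groups
\[ 0 \to \mr{A} \stackrel{n}{\to} \mr{A} \to \mr{A}/n\mr{A} \to 0 . \]
I would apply \Cref{20171216-02} to this sequence, taking the middle term $\mr{B}$ to be $\mr{A}$ itself and the inclusion $\mr{A} \to \mr{B}$ to be multiplication by $n$.

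The conclusion of \Cref{20171216-02} is then that the induced map $\H^{1}(\mc{S},\underline{\mr{A}}) \to \H^{1}(\mc{S},\underline{\mr{A}})$ is injective. The one point to verify is that this induced map is exactly multiplication by $n$ on $\H^{1}(\mc{S},\underline{\mr{A}})$: the constant-sheaf functor sends the multiplication-by-$n$ map $\mr{A} \to \mr{A}$ to the multiplication-by-$n$ endomorphism of $\underline{\mr{A}}$, and the functoriality of $\H^{1}(\mc{S},-)$ together with the fact that $\H^{1}$ is additive on group homomorphisms guarantees that the map it induces on cohomology is multiplication by $n$. Thus multiplication by $n$ is injective on $\H^{1}(\mc{S},\underline{\mr{A}})$ for every $n \ge 1$, which is precisely the statement that $\H^{1}(\mc{S},\underline{\mr{A}})$ is torsion-free.

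I do not anticipate any real obstacle here; the content is entirely carried by \Cref{20171216-01} and \Cref{20171216-02}, and the only thing requiring a moment of care is the identification of the induced map with multiplication by $n$, which is formal. The essential idea worth flagging is that torsion-freeness of $\mr{A}$ is used solely to make $n : \mr{A} \to \mr{A}$ injective so that it fits as the left-hand map of a short exact sequence, after which the previously established injectivity on $\H^{1}$ does all the work.
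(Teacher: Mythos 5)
Your proposal is correct and is essentially identical to the paper's own proof: both apply \Cref{20171216-02} to the short exact sequence $0 \to \mr{A} \stackrel{\times n}{\to} \mr{A} \to \mr{A}/n\mr{A} \to 0$ (valid since $\mr{A}$ is torsion-free) to conclude that multiplication by $n$ is injective on $\H^{1}(\mc{S},\underline{\mr{A}})$. The only difference is that you spell out the formal identification of the induced map with multiplication by $n$, which the paper leaves implicit.
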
 \begin{proof} Let $n$ be a positive integer. Applying \Cref{20171216-02} to the exact sequence \[ 0 \to \mr{A} \stackrel{\times n}{\to} \mr{A} \to \mr{A}/n\mr{A} \to 0 \] implies that the multiplication-by-$n$ map on $\H^{1}(\mc{S},\underline{\mr{A}})$ is injective. \end{proof}

\bibliography{../allbib}
\bibliographystyle{alpha}

\end{document}